\pgfplotsset{compat=newest}
  \newcommand{\mc}{\mathcal}
\newcommand{\F}{{\cal F}}
\newcommand{\K}{\mc K}
\newcommand{\Hil}{\mc H}
\newcommand{\M}{\mc M}
\newtheorem{thm}{Theorem}
\newtheorem{cor}[thm]{Corollary}
\newtheorem{lemma}[thm]{Lemma}
\newtheorem{prop}[thm]{Proposition}
\newtheorem{example}[thm]{Example}
\newtheorem{rem}[thm]{Remark}
\newenvironment{proof}{\noindent {\bf Proof --}}{\hfill$\square$ \vspace{3mm}\endtrivlist}
\numberwithin{equation}{section}
\begin{document}

\begin{frontmatter}



\title{On Description of Dual Frames}


\author[1]{Alan Kamuda}
\ead{kamuda@agh.edu.pl}

\author[1]{Sergiusz Ku\.zel \corref{cor1}}
\ead{kuzhel@agh.edu.pl}

\cortext[cor1]{Corresponding author}

\address[1]{AGH University of Science and Technology, 30-059 al. Mickiewicza 30, Krak\'{o}w, Poland}

\begin{abstract}
One of a key problems in signal reconstruction process with the use of frames is to find a dual frame. 
Typically, a canonical dual frame is used. However, there are many applications where this choice appears to be unfortunate. 
Due to that fact, it is necessary to develop a tool, which helps to find a suitable dual frame. In this paper we give a method to find every dual frames. 
The proposed method is based on Naimark's dilation theorem  and the obtained description of dual frames involves parameters that characterize
extension of a Parseval frame to an orthonormal basis.  These formulas are simplified for frames in finite-dimensional spaces and for near-Riesz bases. 
In the latter case, the simplification is based on the extended and supplemented version
of the Naimark theorem, which is proved in the last part of the paper.  
\end{abstract}

\begin{keyword}

dual frame\sep Parseval frame \sep  frame excess\sep frame potential \sep Riesz basis\sep Naimark dilation theorem.


\MSC[2020]  46B15 \sep  42C15
\end{keyword}

\end{frontmatter}






\section{Introduction}
 Frames attract a steady interests in recent  research in applied mathematics because 
  they are used in various areas such as operator theory, signal processing, computer science, engineering, quantum information theory, see, e.g.,  
  \cite{BCEK, heil, KK, MHC, S} and references therein. They are unavoidable due to the flexibility of  reconstruction. 
  This flexibility is  achievable by many different reconstruction formulas for recovering a signal, which is not the case for bases. 
  One possible decomposition might be obtained with the use of canonical dual frame. 
  However, for some applications, other (alternate) dual frames might be of interest. For example, when frames with 
  specific structure are used, then it might be useful to use representations based on
a dual frame with the same structure. In \cite{DHnew} one can find an example of a wavelet frame for 
which canonical dual is not a wavelet frame, but there are infinitely many other
dual frames having the wavelet-structure. Another motivation for the consideration of
dual frames other then the canonical dual deals with signal restoration and noise reduction
problems \cite[Section 9.3]{LO}.
Alternate dual frames might provide a high precision
linear reconstruction procedure for sigma-delta quantization \cite{BLPY, LPY1} and they 
seem to be helpful in sparse modeling of signals \cite[Section 3.4]{BDE}.
For this reason, it becomes important to obtain a variety of formulas describing all dual frames for a given frame $\F_\varphi=\{\varphi_j, \,j\in \mathbb{J}\}$
  in a Hilbert space $\K$. 
  
 We recall that a frame $\F_\psi=\{\psi_j,  j\in \mathbb{J}\}$ satisfying the condition
$$
f=\sum_{j\in\mathbb{J}}(f, \psi_j)\varphi_j=\sum_{j\in\mathbb{J}}(f, \varphi_j)\psi_j, \qquad f\in\K
$$
is called a \emph{dual frame} of $\F_\varphi$.  An example of dual frame of $\F_\varphi$ is  its \emph{canonical dual} 
$\F_\psi=S^{-1}\F_\varphi$, where $S$ is the frame operator of $\F_\varphi$. 
A dual frame which is not canonical is called \emph{alternate}. 
  
All dual frames $\F_\psi$ of $\F_\varphi$  might be described with the use of Bessel sequences $\{h_j, j\in \mathbb{J}\}$.
  Namely \cite[Theorem 6.3.7]{chri}, \cite{SL}, 
  \begin{equation}\label{1963new}
  \psi_j=S^{-1}\varphi_j+h_j-\sum_{i\in\mathbb J}(S^{-1}\varphi_j,\varphi_i)h_i,
  \end{equation}
  where $S$ is the frame operator of $\F_\varphi$.  The canonical dual frame corresponds to the zero sequence $\{h_i\}$.  However, the relationship between a dual frame $\F_\psi$ and a Bessel sequence $\{h_i\}$ in \eqref{1963new} is not always satisfactory because it does not ensure the selection of dual frames with prescribed properties  (potential, frame operator, optimal frame bounds, etc). 

In the present paper we develop another approach to the description of dual frames
that allows one to express properties of dual frames in terms of parameters of description.
These parameters are chosen with the use of well known fact\footnote{to the best of our knowledge, it was firstly noticed by Heil in \cite[Corollary 6.3.5]{Heil1}} that 
each frame $\F_\varphi=\{\varphi_j, \,j\in \mathbb{J} \}$ admits the presentation 
 $\F_\varphi=S^{1/2}\F_e$,  where $S$ is the frame operator of $\F_\varphi$ and $\F_e=\{e_j, \,j\in \mathbb{J}  \}$ is a Parseval frame  (PF). For technical reasons, we rewrite the frame operator as $S=e^Q$, where $Q$ is a bounded self-adjoint operator.
Then $\F_\varphi=e^{Q/2}\F_e$. It is principal for our approach that the operator $Q$ and the   
PF $\F_e$ \emph{are determined uniquely} by the given frame  $\F_\varphi$ (see Remark \ref{AGHWMS16}).
This means that the collection of all frames in $\K$ is in  
one-to-one correspondence with the set of pairs $\{Q, \F_e\}$, where $Q$ is a bounded self-adjoint operator in $\K$ and $\F_e$ is a PF in $\K$. This relationship is sufficiently informative since $Q$ determines the frame operator, the frame bounds, and the potential
of $\F_\varphi$, while the excess of $\F_\varphi$ is characterized by $\F_e$ (Section \ref{sec2.1}). 

Given a frame $\F_\varphi=e^{Q_\varphi/2}\F_e$, its duals
$\F_\psi$ are characterized by certain pairs $\{Q, \F_e'\}$, i.e.
$\F_\psi=e^{Q/2}\F_e'$. \emph{Could one specify these parameters?} 
We completely solved this problem in Section \ref{sec2.2} assuming for the simplicity that $Q_\varphi=0$, i.e.,  
 $\F_\varphi=\F_e$. Our proofs are essentially based on the
frame's version of the Naimark dilation theorem  proved by Han and Larson
(see \cite{hanlar} or Theorem \ref{Naimark}). The result of Theorem \ref{NEW1} 
is close to \cite[Proposition 2.4]{DH}, where an inverse problem: 
\emph{given a frame $\F_\psi$ with the frame operator $S$, under 
what conditions on $S$ does $\F_\psi$ admit a Parseval dual?} was investigated by
other methods.
Dual frames of a general frame $\F_\varphi$ are described in Section \ref{sec2.5} 
by the reduction to the results of Section \ref{sec2.2}. 

The parameters $\{Q, \F_e'\}$ in formulas for dual frames (Corollary \ref{DDD2b}, Theorem \ref{thm5}) 
can be easily calculated in concrete situations. 
We illustrate this fact by constructing dual frames for a PF with finite potential 
and excess one,  and for the Casazza-Christensen frame in Sections \ref{Sec2.3}, \ref{sec2.4b}. 

In Section \ref{sec3.2}, for the case of near-Riesz bases, we improve the general formula \eqref{NNN960} 
of dual frames by specifying the complementary PF
$\F_m$ in Theorem \ref{Naimark}.
For this reason, in Section \ref{sec3.1}, we establish a version of Naimark dilation theorem for near-Riesz bases.
The corresponding result -- Theorem \ref{Naimark2} fits well the expectations: 
the complementary PF $\F_m$ should be determined in major by a Riesz basis part of the original PF $\F_e$.  

 Throughout the paper,  $\K$ means  a complex Hilbert space with scalar product $ (\cdot, \cdot)$ 
linear in the first argument.  All operators in $\K$ are supposed to be linear, the identity operator is denoted by $I$.
The index set $\mathbb{J}$ is countable (or finite) and $|\mathbb{J}|$ means its cardinality.
 
\section{Dual frames}
\subsection{Preliminaries}\label{sec2.1}
Here all necessary information about frame theory are presented in a form convenient for our exposition.
More details  can be found in \cite{chri, hanlar, heil}.

A set of vectors $\F_\varphi=\{\varphi_j, \,j\in \mathbb{J}\}$ is called a \emph{frame} in $\K$ 
if there are constants $A$ and $B$, $0<A\leq B<\infty$, such that, for all $f\in\K$, 
\begin{equation}\label{K3}
A\|f\|^2\leq\sum_{j\in\mathbb{J}}|(f, \varphi_j)|^2\leq{B}\|f\|^2.
\end{equation}

The optimal constants in \eqref{K3}  (maximal for $A$ and minimal for $B$) are
called the frame bounds.  The \emph{frame operator} $Sf=\sum_{j\in\mathbb{J}}(f, \varphi_j)\varphi_j$ associated with $\F_\varphi$   is uniformly positive  in $\K$. 

A frame $\F_\varphi$ is called \emph{$A$-tight} if $A=B$.  
The formula \eqref{K3} for $1$-tight frames is similar to the Parseval equality for orthonormal bases: 
$\sum_{j\in\mathbb{J}}{|(f, \varphi_j)|^2}=||f||^2$ and the corresponding  $1$-tight frame $\F_\varphi$ is called  \emph{a  Parseval frame} (PF in the following). 
 PF's can be considered  as a generalization of orthonormal bases $\F_{e}=\{e_j, j\in\mathbb{J}\}$.
For this reason we will try to use the same notation $\F_{e}$ for  PF's.
 
  It is important that each frame can be reduced to a PF: 
 \emph{if   $\F_\varphi$ is a frame with frame operator $S$,  then 
 $\F_e=S^{-1/2}\F_\varphi=\{e_j=S^{- 1/2}\varphi_j\}$ is a PF.}
Denoting $S=e^Q$ 
we reformulate and extend this result in a form convenient for our presentation:
\begin{prop}\label{WW1}
The set $\F_\varphi=\{\varphi_j, j\in\mathbb{J} \}$ is a frame if and only if  there exists a bounded self-adjoint operator $Q$ in $\K$ 
and a PF $\F_e=\{e_j,  j\in\mathbb{J}\}$  such that
\begin{equation}\label{MM1}
 \F_{\varphi}=e^{Q/2}\F_e.
 \end{equation}

The PF $\F_e$ in \eqref{MM1} is an orthonormal basis in $\K$ if and only if  $\F_{\varphi}$ is a Riesz basis. 
 The frame operator of  $\F_{\varphi}$  coincides with $S=e^Q$ and
the canonical dual frame has the form  $\F_\psi=e^{-Q/2}\F_{e}$. 
\end{prop}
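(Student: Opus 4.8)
The plan is to derive both implications from the functional calculus for the frame operator, together with the elementary observation that a bounded, boundedly invertible operator maps a Parseval frame to a frame and vice versa.

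First I would treat sufficiency. Suppose $\F_\varphi = e^{Q/2}\F_e$ with $Q = Q^*$ bounded and $\F_e$ a PF. Then $e^{Q/2}$ is self-adjoint, bounded, and boundedly invertible with inverse $e^{-Q/2}$. For each $f \in \K$ the Parseval identity for $\F_e$ gives
\begin{equation*}
\sum_{j\in\mathbb{J}}|(f,\varphi_j)|^2 = \sum_{j\in\mathbb{J}}|(e^{Q/2}f, e_j)|^2 = \|e^{Q/2}f\|^2 ,
\end{equation*}
and sandwiching the right-hand side between $\|e^{-Q/2}\|^{-2}\|f\|^2$ and $\|e^{Q/2}\|^2\|f\|^2$ yields \eqref{K3}. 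For necessity I would invoke the fact recalled just before the proposition: if $\F_\varphi$ is a frame with frame operator $S$, then $\F_e := S^{-1/2}\F_\varphi$ is a PF. Since $S$ is bounded, self-adjoint and uniformly positive, $\sigma(S)$ is a compact subset of $(0,\infty)$, so $Q := \ln S$ is a well-defined bounded self-adjoint operator with $e^{Q}=S$ and $e^{Q/2}=S^{1/2}$; hence $\F_\varphi = S^{1/2}\F_e = e^{Q/2}\F_e$, which is \eqref{MM1}.

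For the frame operator I would substitute \eqref{MM1} into its definition, pull the bounded operator $e^{Q/2}$ out of the unconditionally convergent sum, and use that the frame operator of the PF $\F_e$ equals $I$:
\begin{equation*}
S_\varphi f = \sum_{j\in\mathbb{J}}(f, e^{Q/2}e_j)\,e^{Q/2}e_j = e^{Q/2}\Big(\sum_{j\in\mathbb{J}}(e^{Q/2}f, e_j)\,e_j\Big) = e^{Q/2}\big(e^{Q/2}f\big) = e^{Q}f ,
\end{equation*}
so $S = e^Q$, and the canonical dual is $S^{-1}\F_\varphi = e^{-Q}e^{Q/2}\F_e = e^{-Q/2}\F_e$. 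For the Riesz-basis equivalence I would use the characterization of a Riesz basis as the image of an orthonormal basis under a bounded, boundedly invertible operator: if $\F_e$ is an orthonormal basis, then $\F_\varphi = e^{Q/2}\F_e$ is such an image and hence a Riesz basis; conversely, if $\F_\varphi$ is a Riesz basis, then $\F_e = e^{-Q/2}\F_\varphi$ is a Riesz basis that is simultaneously a PF, so its frame expansion $f = \sum_{j\in\mathbb{J}} (f,e_j)e_j$ is the unique expansion, and uniqueness of the biorthogonal coefficient functionals of a Riesz basis forces $(e_i,e_j) = \delta_{ij}$, i.e.\ $\F_e$ is an orthonormal basis. (Alternatively, the analysis operators satisfy $\theta_\varphi = \theta_e\, e^{Q/2}$ with $e^{Q/2}$ a bijection of $\K$, so $\mathrm{ran}\,\theta_\varphi = \mathrm{ran}\,\theta_e$, and one uses that a frame is a Riesz basis iff its analysis operator is onto $\ell_2(\mathbb{J})$ while a PF is an orthonormal basis under the same condition.)

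The only genuinely delicate point is the well-definedness of $Q = \ln S$ as a bounded operator: this is precisely where uniform positivity of $S$, and not mere positivity, is required, so that $\ln$ is bounded and continuous on $\sigma(S)\subset[A,B]$ with $A>0$. Everything else is routine bookkeeping with unconditionally convergent series and continuity of bounded operators; the one manipulation to justify carefully is interchanging $e^{Q/2}$ with the infinite sums, which follows from its boundedness.
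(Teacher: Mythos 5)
Your proposal is correct and follows essentially the same route as the paper, which states the proposition as a reformulation of the classical fact that $\F_e=S^{-1/2}\F_\varphi$ is a PF (with $Q=\ln S$ well defined by uniform positivity of $S$) and verifies $S=e^{Q}$ by exactly the computation you give, recorded in Remark \ref{AGHWMS16}. You merely supply details the paper leaves implicit (the sufficiency direction and the Riesz-basis equivalence), and these are handled correctly.
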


\begin{rem}\label{AGHWMS16}
In contrast to the familiar description of a frame $\F_\varphi=U\F_e$ (see, e.g., \cite[Theorem 5.5.5]{chri}), where an orthonormal basis $\F_e$ and a bounded and surjective operator
$U$ can be chosen variously, the self-adjoint operator $Q$ and the PF $\F_e$ in \eqref{MM1}
\underline{are determined uniquely}. Indeed, \eqref{MM1} determines a frame with the frame operator $S=e^Q$ because,
$$
Sf=\sum_{j\in\mathbb{J}}(f, \varphi_j)\varphi_j=\sum_{j\in\mathbb{J}}(f, e^{Q/2}e_j)e^{Q/2}e_j=e^{Q/2}\sum_{j\in\mathbb{J}}(e^{Q/2}f, e_j)e_j=e^{Q/2}e^{Q/2}f=e^{Q}f.
$$
Therefore, $e^{Q/2}$ in \eqref{MM1} is determined uniquely as the square root of $S$.  Then, the PF $\F_e$ is also determined uniquely as 
$\F_e=e^{-Q/2}\F_{\varphi}$.
\end{rem}

In view of Proposition \ref{WW1} and Remark \ref{AGHWMS16} the set of frames in $\K$ is in 
one-to-one correspondence with the set of pairs $\{Q, \F_e\}$, where $Q$ is a bounded self-adjoint
operator in $\K$, while $\F_e$ is a PF in $\K$.  For this reason, one can expect that
properties of a frame $\F_\phi$ can be expressed in terms of the corresponding pair
$\{Q, \F_e\}$. For example, the operator $Q$ determines the frame bounds and the potential
of $\F_\varphi$, while the excess of $\F_\varphi$ is characterized by the PF $\F_e$.
Namely, in view of \eqref{K3} and \eqref{MM1}, the frame bounds of $\F_\varphi$ are 
$A={1}/{\|e^{-Q/2}\|^2}$ and   $B=\|e^{Q/2}\|^2$.  
The potential of a frame 
$\F_\varphi=\{\varphi_j, \,j\in \mathbb{J}\}$ is defined by 
$$
{\bf FP}[\F_\varphi]=\sum_{j,i\in\mathbb{J}}|(\varphi_i, \varphi_j)|^2
$$
and, if $\dim\K<\infty$,
\begin{equation}\label{AGHWMS2}
{\bf FP}[\F_\varphi]=\sum_{n=1}^{\dim\K}\lambda_n^4,
\end{equation}
 where $\lambda_n$ are eigenvalues of $e^{Q/2}$  \cite{BF, Caz}. 

Following \cite{BCCL}, 
we recall that the \emph{excess} $e[\F_\varphi]$ of $\F_\varphi$  is the greatest integer $n$ such that  $n$ elements  
can be deleted from the frame $\F_\varphi$ and still leave a complete set, or $\infty$ if there is no upper bound to the number of elements that can be removed.

\begin{lemma}\label{WW1b}
The excess of a frame $\F_\varphi=e^{Q/2}\F_e$  coincides with the excess of $\F_e$.  
\end{lemma}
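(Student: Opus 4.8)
The plan is to show that deleting elements from $\F_\varphi=e^{Q/2}\F_e$ leaves a complete set exactly when deleting the corresponding elements from $\F_e$ leaves a complete set; since excess is defined as the supremum of the number of deletable elements, the two excesses must then agree. The starting point is the elementary observation that $e^{Q/2}$ is a bounded, boundedly invertible, self-adjoint operator on $\K$ (it is uniformly positive), so it maps $\K$ bijectively onto $\K$ and, in particular, it sends dense subspaces to dense subspaces and closed subspaces to closed subspaces.

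The key step is the following: fix any subset $\mathbb{J}'\subseteq\mathbb{J}$ and consider the complements $\{\varphi_j : j\in\mathbb{J}'\}$ and $\{e_j : j\in\mathbb{J}'\}$ obtained by deleting the elements indexed by $\mathbb{J}\setminus\mathbb{J}'$. I claim $\overline{\mathrm{span}}\,\{\varphi_j : j\in\mathbb{J}'\}=\K$ if and only if $\overline{\mathrm{span}}\,\{e_j : j\in\mathbb{J}'\}=\K$. Indeed, since $\varphi_j=e^{Q/2}e_j$, linearity gives $\mathrm{span}\,\{\varphi_j : j\in\mathbb{J}'\}=e^{Q/2}\big(\mathrm{span}\,\{e_j : j\in\mathbb{J}'\}\big)$, and applying the bicontinuous bijection $e^{Q/2}$ commutes with taking closures, so $\overline{\mathrm{span}}\,\{\varphi_j : j\in\mathbb{J}'\}=e^{Q/2}\big(\overline{\mathrm{span}}\,\{e_j : j\in\mathbb{J}'\}\big)$. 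Because $e^{Q/2}$ maps $\K$ onto $\K$, the left-hand side equals $\K$ precisely when the closed subspace inside the parentheses on the right equals $\K$. (Alternatively one can argue via orthogonal complements: $f\perp\varphi_j$ for all $j\in\mathbb{J}'$ iff $e^{Q/2}f\perp e_j$ for all such $j$, and $f\mapsto e^{Q/2}f$ is a bijection of $\K$, so the annihilators are trivial simultaneously.)

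Granting this equivalence, the proof concludes quickly. A set of $n$ indices $\mathbb{J}\setminus\mathbb{J}'$ can be deleted from $\F_\varphi$ leaving a complete set if and only if the same $n$ indices can be deleted from $\F_e$ leaving a complete set. Hence the collection of integers $n$ that are admissible in the definition of $e[\F_\varphi]$ coincides with the collection admissible for $e[\F_e]$, and taking suprema yields $e[\F_\varphi]=e[\F_e]$; the convention that the excess is $\infty$ when there is no upper bound is respected as well.

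I do not expect a genuine obstacle here: the entire content is that an invertible bounded operator preserves both density and the failure of density of a spanning set, and this transfers verbatim to the combinatorial definition of excess. The only point requiring a line of care is justifying that $e^{Q/2}$ commutes with the closure operation — which is immediate from its being a homeomorphism of $\K$ onto $\K$ — and making sure the argument is stated for arbitrary, not merely finite, index sets $\mathbb{J}$, so that the case $e[\F_\varphi]=\infty$ is covered.
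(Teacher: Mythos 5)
Your proof is correct, but it takes a genuinely different route from the paper. The paper does not touch the deletion definition directly: it passes to the analysis operators, observing that $(f,\varphi_j)=(f,e^{Q/2}e_j)=(e^{Q/2}f,e_j)$ and that $e^{Q/2}$ is a bijection of $\K$, so $\mathcal{R}(\theta_\varphi)=\mathcal{R}(\theta_e)$; it then invokes the cited result of Balan--Casazza--Heil--Landau identifying the excess with $\dim[\ell_2(\mathbb{J})\ominus\mathcal{R}(\theta_\varphi)]$, whence the two excesses agree. You instead work straight from the combinatorial definition, showing that for every index subset $\mathbb{J}'$ the family $\{\varphi_j: j\in\mathbb{J}'\}$ is complete iff $\{e_j: j\in\mathbb{J}'\}$ is, because the uniformly positive operator $e^{Q/2}$ is a homeomorphism of $\K$ onto itself and hence commutes with taking closed spans. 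Your argument is self-contained and more elementary --- it needs no external characterization of excess, only that an invertible bounded operator preserves completeness of a family --- and it handles the infinite-excess case transparently. The paper's argument is a one-liner given the cited lemma and yields slightly more as a by-product, namely the identification of the common deficiency subspace $\ell_2(\mathbb{J})\ominus\mathcal{R}(\theta_\varphi)=\ell_2(\mathbb{J})\ominus\mathcal{R}(\theta_e)$, which is reused elsewhere (e.g.\ in Remark \ref{rem2}). Both proofs are sound; there is no gap in yours.
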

\begin{proof}
Each frame  $\F_\varphi$ determines an analysis operator $\theta_\varphi : \K \to \ell_2(\mathbb{J})$:
$$
\theta_\varphi{f}=\{(f, \varphi_j)\}_{j\in\mathbb{J}}, \qquad f\in\K.
$$
The image ${\mathcal R}(\theta_\varphi)$ of $\theta_\varphi$ coincides with the image  ${\mathcal R}(\theta_e)$ of $\theta_e$, 
since $(f, \varphi_j)=(f, e^{Q/2}e_j)=(e^{Q/2}f, e_j)$.  Due to \cite[Lemma 4.1]{BCCL}, 
$e[\F_\varphi]=\dim[\ell_2(\mathbb{J})\ominus{\mathcal R}(\theta_\varphi)]=\dim[\ell_2(\mathbb{J})\ominus{\mathcal R}(\theta_e)]=e[\F_e]$.
\end{proof}

Our approach to the description of dual frames is based on the Naimark dilation theorem  established in \cite[Chapter 1]{hanlar}.
\begin{thm}\label{Naimark}
Let $\F_e=\{e_j, \,j\in \mathbb{J}\}$ be a PF in a Hilbert space $\K$. Then there exists a Hilbert space $\M$ and a complementary PF $\F_{m}=\{{m}_j, \,j\in \mathbb{J}\}$ in $\M$ such that
\begin{equation}\label{K3b}
\F_{h}=\F_{e\oplus{m}}=\{h_j=e_j\oplus{m}_j, \  j\in \mathbb{J}\}
\end{equation} 
is an orthonormal basis for $\Hil=\K\oplus\M$. 
The extension of a PF $\F_e$ to an orthonormal basis  $\F_{h}=\F_{e\oplus{m}}$ described above is unique up to unitary equivalence.
\end{thm}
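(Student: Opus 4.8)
The plan is to realize the dilation concretely inside $\ell_2(\mathbb{J})$ by means of the analysis operator. First I would observe that for a Parseval frame $\F_e$ the analysis operator $\theta_e:\K\to\ell_2(\mathbb{J})$, $\theta_e f=\{(f,e_j)\}_{j\in\mathbb{J}}$, is an isometry, since $\|\theta_e f\|^2=\sum_{j\in\mathbb{J}}|(f,e_j)|^2=\|f\|^2$ is precisely the Parseval identity. Hence $\mathcal{R}(\theta_e)$ is a closed subspace of $\ell_2(\mathbb{J})$, the operator $P:=\theta_e\theta_e^*$ is the orthogonal projection onto $\mathcal{R}(\theta_e)$, and a one-line computation with the canonical basis $\{\delta_j\}$ of $\ell_2(\mathbb{J})$ gives $\theta_e^*\delta_j=e_j$ and therefore $P\delta_j=\theta_e e_j$.

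Next I would build the complementary space. Put $\M:=\ell_2(\mathbb{J})\ominus\mathcal{R}(\theta_e)$ and $\tilde e_j:=(I-P)\delta_j$, so that in the decomposition $\ell_2(\mathbb{J})=\mathcal{R}(\theta_e)\oplus\M$ one has $\delta_j=\theta_e e_j\oplus\tilde e_j$. Since $\theta_e$ is a unitary map of $\K$ onto $\mathcal{R}(\theta_e)$, the operator $U:=\theta_e^{-1}\oplus I_\M$ is unitary from $\ell_2(\mathbb{J})$ onto $\Hil:=\K\oplus\M$ and carries $\delta_j$ to $\mathbf{e}_j=e_j\oplus\tilde e_j$; being the image of an orthonormal basis under a unitary, $\{\mathbf{e}_j\}$ is an orthonormal basis of $\Hil$. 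Its $\K$-component recovers $\F_e$, and $\F_{\tilde e}=\{\tilde e_j\}$, as the compression of an orthonormal basis to a subspace, is a Parseval frame for $\M$.

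For the uniqueness statement I would take two such extensions $\Hil_i=\K\oplus\M_i$ with orthonormal bases $\mathbf{e}_j^{(i)}=e_j\oplus\tilde e_j^{(i)}$, $i=1,2$, and define $V:\Hil_1\to\Hil_2$ on the bases by $V\mathbf{e}_j^{(1)}=\mathbf{e}_j^{(2)}$; since both families are orthonormal bases indexed by the same set, $V$ is unitary. For $f\in\K\subset\Hil_1$ the expansion coefficients are $(f,\mathbf{e}_j^{(1)})=(f,e_j)$ because the $\M_1$-component of $f$ vanishes, so $f=\sum_{j}(f,e_j)\mathbf{e}_j^{(1)}$ and hence $Vf=\sum_j(f,e_j)(e_j\oplus\tilde e_j^{(2)})$, whose $\K$-component equals $\sum_j(f,e_j)e_j=f$ by the Parseval reconstruction formula. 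Because $V$ preserves norms, the $\M_2$-component of $Vf$ must then be $0$, i.e.\ $V|_\K=I_\K$. Consequently $V$ restricts to a unitary of $\M_1$ onto $\M_2$ sending $\tilde e_j^{(1)}=\mathbf{e}_j^{(1)}-e_j$ to $\tilde e_j^{(2)}$, which is exactly the asserted uniqueness up to unitary equivalence.

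I do not expect a serious obstacle here; the proof is essentially bookkeeping. The points that need care are (i) checking that $\{(I-P)\delta_j\}$ is genuinely a Parseval frame for $\M$ and not merely a Bessel sequence — this is where one uses that $\{\delta_j\}$ is an orthonormal basis together with the fact that $I-P$ is an orthogonal projection — and (ii) in the uniqueness part, the forced vanishing of the $\M_2$-component of $Vf$, which hinges on $V$ being isometric. Everything else is formal manipulation of the analysis operator.
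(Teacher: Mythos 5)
Your proof is correct. The paper does not prove Theorem \ref{Naimark} itself --- it quotes the result from Han and Larson \cite[Proposition 1.1]{hanlar} --- and your argument via the analysis operator (the isometry $\theta_e$, the projection $\theta_e\theta_e^*$ onto $\mathcal{R}(\theta_e)$, and the decomposition $\delta_j=\theta_e e_j\oplus(I-P)\delta_j$), together with the uniqueness step forcing $V|_{\K}=I_{\K}$, is precisely the standard proof given in that reference.
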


\begin{rem}\label{rem2}
It follows from \cite[Lemma 4.1]{BCCL}, the proof of  \cite[Proposition 1.1]{hanlar}, and the relation \eqref{AGHWMS2} that
the dimension of spaces $\K$ and $\M$ in Theorem \ref{Naimark} coincide,  respectively, with the potential ${\bf FP}[\F_e]$ and with the excess 
$e[\F_{e}]$ of $\F_{e}$. Moreover, $\dim\Hil=|\mathbb{J}|={\bf FP}[\F_e]+e[\F_{e}]$.
\end{rem}

\subsection{Dual frames for a Parseval frame}\label{sec2.2}
Let $\F_e=\{e_j, \,j\in \mathbb{J}\}$ be a Parseval frame in $\K$. 
According to Proposition \ref{WW1}, each dual frame $\F_\psi=\{\psi_j, \,j\in \mathbb{J}\}$ of  $\F_e$
can be presented as  $\F_\psi=e^{Q/2}\F_{e'}$, where $Q$ and $\F_{e'}$ are uniquely determined by
 $\F_\psi$. \emph{Could we specify conditions imposed on $Q$?} A similar question
inspired by an inverse problem: \emph{given a frame $\F_\psi$ with the frame operator $S$, under 
what conditions on $S$ does $\F_\psi$ admit a Parseval dual?} was investigated in \cite[Proposition 2.4]{DH}.

\begin{thm}\label{NEW1}
Given a  PF $\F_e$ in $\K$ with the excess $e[\F_e]$. 
 If $\F_\psi=e^{Q/2}\F_{e'}$  is a dual frame of $\F_e$, then the bounded operator $Q$ is 
 nonnegative\footnote{a bounded operator $Q$ is called  nonnegative if $(Qf, f) \geq 0$ for $f\in\K$} and
the relation
\begin{equation}\label{NEW2}
\dim\mathcal{R}(I-e^{-Q})\leq{e[\F_e]}
\end{equation}
holds. Conversely, if a bounded  operator $Q$ is nonnegative in $\K$  and \eqref{NEW2} holds,  
then there exists  a PF $\F_{{e'}}=\{{e}_j', \, j\in \mathbb{J}\}$ such that  
$\F_\psi=e^{Q/2}\F_{e'}$  is a dual frame of $\F_e$.
\end{thm}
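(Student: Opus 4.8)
The plan is to exploit the Naimark machinery described in the introduction. First suppose $\F_\psi = e^{Q/2}\F_{e^o}$ is a dual frame of $\F_e$. The duality relation $f = \sum_j (f,\psi_j)\varphi_j = \sum_j (f, e^{Q/2}e_j^o)e_j$ says precisely that $\theta_e^* e^{Q/2}\theta_{e^o} = I$ on $\K$, where $\theta_e,\theta_{e^o}$ are the analysis operators of the two PF's. Using the Naimark dilation of Theorem \ref{Naimark}, extend $\F_e$ to an orthonormal basis $\F_{\mathbf e}=\{\mathbf e_j\}$ of $\Hil = \K\oplus\M$, so that $\theta_e = P\theta_{\mathbf e}$ with $\theta_{\mathbf e}:\K\to\ell_2(\mathbb J)$ unitary onto its range (which is all of $\ell_2(\mathbb J)$ when we view $\Hil$ via this basis). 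The key computation: since $\F_e$ is Parseval, $e^{-Q/2}$ must be a contraction that is moreover recovered from $e^{Q/2}$ through the duality identity. Concretely, I would show that the duality relation forces $e^{-Q/2}$ to be the compression $PW|_\K$ of a partial isometry/unitary $W$ on $\Hil$ sending $\F_{\mathbf e}$ to the orthonormal basis dilating $\F_{e^o}$; since a compression of a contraction is a contraction and $e^{Q/2}\geq\|e^{Q/2}\|^{-1}\,(\text{something})$, one gets $\|e^{-Q/2}\|\le 1$, i.e. $e^{-Q}\le I$, which is equivalent to $Q\geq 0$. That disposes of nonnegativity.

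For the dimension estimate \eqref{NEW2}: from $Q\ge 0$ the operator $I - e^{-Q}$ is nonnegative with norm $<1$, and $\mathcal R(I-e^{-Q})$ is the orthogonal complement of $\ker(I-e^{-Q}) = \ker Q$. I would argue that on $\ker Q$ the frame $\F_e$ and its dual $\F_\psi$ must already ``agree'' canonically (there $e^{Q/2}=I$), so the freedom genuinely lives on $\mathcal R(I-e^{-Q})$, and this freedom must be absorbed by the redundancy of $\F_e$, i.e. by its excess. Made precise via the dilation: the unitary $W$ equals $I$ on the part of $\Hil$ over $\ker Q$ (it moves only vectors ``touching'' the defect space), so the nontrivial action of $W$ is confined to a subspace whose projection to $\K$ has dimension $\le \dim\M = e[\F_e]$ by Remark \ref{rem2}; comparing dimensions gives $\dim\mathcal R(I-e^{-Q}) \le \dim\M = e[\F_e]$.

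For the converse, given $Q\ge 0$ with $\dim\mathcal R(I-e^{-Q})\le e[\F_e]$, I would construct $\F_{e^o}$ explicitly. Dilate $\F_e$ to $\F_{\mathbf e}$ on $\Hil=\K\oplus\M$. Because $e^{-Q/2}$ is a contraction on $\K$, it admits a unitary dilation; the point is to choose the dilation acting inside $\Hil$ — this is possible precisely because the ``room'' needed is $\dim\overline{\mathcal R(I-e^{-Q})}\le\dim\M$ (the defect spaces of the contraction $e^{-Q/2}$ have dimension $\dim\mathcal R(I-e^{-Q})$, since $I-e^{-Q}=(I-e^{-Q/2})(\text{positive invertible})$ has the same range, and one checks the defect indices of $e^{-Q/2}$ coincide with $\dim\mathcal R(I-e^{-Q})$). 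Let $W$ be such a unitary on $\Hil$ with $PW|_\K = e^{-Q/2}$, set $\mathbf e_j^o = W\mathbf e_j$ and $e_j^o = P\mathbf e_j^o$; then $\F_{e^o}=\{e_j^o\}$ is a PF by the remark before Theorem \ref{Naimark}, and a direct check using $\theta_{e^o}=P\theta_{\mathbf e}W^*$ (suitably interpreted) shows $\theta_e^* e^{Q/2}\theta_{e^o} = e^{Q/2}(PW|_\K)^* = e^{Q/2}e^{-Q/2}=I$, so $\F_\psi=e^{Q/2}\F_{e^o}$ is a dual of $\F_e$.

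The main obstacle I anticipate is the bookkeeping around dilating the contraction $e^{-Q/2}$ \emph{inside the given} $\Hil$ rather than an abstract larger space: one must match the defect indices of $e^{-Q/2}$ — which I claim equal $\dim\mathcal R(I-e^{-Q})$ — against $\dim\M=e[\F_e]$, and handle the (possibly infinite-dimensional, non-closed-range) case of $I-e^{-Q}$ carefully, passing to closures where needed. Verifying the identity $\dim\overline{\mathcal R(\,\cdot\,)} = $ defect index, and that $W$ can be taken to fix the over-$\ker Q$ part of $\Hil$, is where the real work lies; the Parseval/duality algebra is routine once the dilation picture is set up.
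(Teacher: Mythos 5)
Your overall architecture is the same as the paper's: dilate $\F_{e^o}$ via Naimark to an orthonormal basis of the same $\Hil=\K\oplus\M$, let $W\mathbf{e}_j=\mathbf{e}_j^o$, deduce from duality that $e^{-Q/2}=PW|_{\K}$ is a compression of a unitary and hence a contraction (so $Q\geq 0$), and for the converse build a unitary $W$ on $\Hil$ with prescribed corner $e^{-Q/2}$ and set $\F_{e^o}=PW\F_{\mathbf{e}}$. The forward nonnegativity argument and the converse are essentially the paper's; for the converse the paper discharges the ``real work'' you flag by writing the explicit Julia-type unitary \eqref{eq:ww} on $\K\oplus\overline{\mathcal{R}(I-e^{-Q})}$ (possible because $e^{-Q/2}$ is a \emph{self-adjoint} contraction, so its two defect spaces coincide with $\overline{\mathcal{R}(I-e^{-Q})}$) and extending it by the identity on $\M\ominus\overline{\mathcal{R}(I-e^{-Q})}$, which is where hypothesis \eqref{NEW2} enters.

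The one genuine gap is your derivation of \eqref{NEW2} in the forward direction. It is true that $W$ fixes $\ker Q\oplus 0$ pointwise (since $PWf=f$ and $\|Wf\|=\|f\|$ force $Wf=f$ there), so $W$ maps $\overline{\mathcal{R}(I-e^{-Q})}\oplus\M$ unitarily onto itself; but this alone gives no comparison between $\dim\overline{\mathcal{R}(I-e^{-Q})}$ and $\dim\M$ --- your sentence ``the nontrivial action of $W$ is confined to a subspace whose projection to $\K$ has dimension $\leq\dim\M$'' is exactly the assertion to be proved, not a consequence of what precedes it. The missing ingredient is the column relation coming from unitarity of $W$ in the block form \eqref{DDD1}: applying $W$ to $f\oplus 0$ and computing norms gives $e^{-Q}+W_{21}^{*}W_{21}=I$, i.e.\ $W_{21}^{*}W_{21}=I-e^{-Q}$, hence $\ker W_{21}=\ker(I-e^{-Q})$ and $W_{21}$ is injective on $\overline{\mathcal{R}(I-e^{-Q})}$ into $\M$; only then does $\dim\overline{\mathcal{R}(I-e^{-Q})}\leq\dim\M=e[\F_e]$ (Remark \ref{rem2}) follow. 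This is a one-line fix, but as written that step does not go through.
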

\begin{proof}
Let $\F_\psi=e^{Q/2}\F_{e'}$  be a dual frame of $\F_e$. By Theorem \ref{Naimark}, the PF $\F_{e'}=\{{e}_j', j\in \mathbb{J}\}$  can be extended to
an orthonormal basis  $\F_{h'}=\{h_j'=e_j'\oplus{m_j'}, j\in \mathbb{J}\}$ in a Hilbert space $\Hil'$ containing $\K$ as a subspace. By the construction, $\dim\Hil=|\mathbb{J}|=\dim\Hil'$. 
Hence, the spaces  $\Hil$ and  $\Hil^o$ can be identified and the linear operator $W$ acting on vectors of the orthonormal basis $\F_{h}$ (see \eqref{K3b})  as
\begin{equation}\label{AGHWMS29} 
 Wh_j=W(e_j\oplus{m}_j)=e_j'\oplus{m}_j'=h_j', \qquad j\in\mathbb{J} 
 \end{equation}
 is unitary in  $\Hil$.  
 
 Denote $T=PW|_{\K}$, where  $P$ is the orthogonal 
projection operator in $\Hil$ onto the subspace $\K$.  Since $\F_\psi$ is dual for $\F_e$,
 $$
 f=\sum(f, e_j)\psi_j=e^{Q/2}\sum(f, e_j)e_j'=e^{Q/2}P\sum(f, e_j)h_j'=e^{Q/2}PW\sum(f, h_j)h_j=e^{Q/2}PWf
 $$ 
for all $f\in\K$. Therefore,  $e^{Q/2}T=I$ and $T=e^{-Q/2}$. By the construction, $T$ is a contraction in $\K$. Hence, $Q$ has to be a nonnegative bounded operator.

With  respect to the decomposition $\Hil=\K\oplus\M$ (Theorem \ref{Naimark}) the operator  $W$ has the form
\begin{equation}\label{DDD1}
W=\left[\begin{array}{cc}
e^{-Q/2} &  W_{12} \\
W_{21}  &  W_{22}
\end{array}\right],
\end{equation}  
where  $W_{21} : \K\to\M$ and $W_{12} : \M\to\K$. The operator coefficients $W_{ij}$ satisfy additional restrictions  
since $W$ is unitary. In particular, considering $W$ on elements $f\oplus{0}$, we get $((e^{-Q}+W_{21}^*W_{21})f, f)=(f,f)$.
Therefore, 
$$
I-e^{-Q}=W_{21}^*W_{21} \quad   \mbox{and} \quad  \ker(I-e^{-Q})=\ker{W_{21}}. 
$$
 Using the decomposition
$\K=\overline{\mathcal{R}(I-e^{-Q})}\oplus\ker(I-e^{-Q})$,  we arrive at the conclusion that  $W_{21} :\overline{\mathcal{R}(I-e^{-Q})} \to\M$
is an injective  mapping. Hence, $\dim\overline{\mathcal{R}(I-e^{-Q})}\leq{\dim\M}$ that justifies \eqref{NEW2}  due to Remark \ref{rem2}.  

Conversely, given a bounded nonnegative operator $Q$ satisfying \eqref{NEW2}. The nonnegativity of $Q$ implies that $T=e^{-Q/2}$ is 
a self-adjoint contraction in $\K$.  Assume that there exists a unitary operator $W$ in $\Hil$ such that $T=PW|_{\K}$.
Then, the set $\F_{h'}=\{h_j'=Wh_j\}$ is an orthonormal basis of $\Hil$ and  $\F_{e'}=\{e_j'=P{h}_j'\}=P\F_{h'}$
is a PF in $\K$. Denote $\F_\psi=e^{Q/2}\F_{e'}=\{\psi_j=e^{Q/2}e_j'\}$. Since
$$
\sum|(f, \psi_j)|^2=\sum|(e^{Q/2}f, e_j')|^2=\|e^{Q/2}f\|^2\leq\|e^{Q/2}\|^2\|f\|^2, \quad f\in\K,
$$
the set $\F_\psi$ is a Bessel sequence. Furthermore, 
$$
\sum(f, e_j)\psi_j=e^{Q/2}P\sum(f, e_j)h_j'=e^{Q/2}PW\sum(f, h_j)h_j=e^{Q/2}PWf=e^{Q/2}e^{-Q/2}f=f
$$
and, hence, $\F_\psi$ is a dual frame of $\F_e$.  

In order to compete the proof one should 
verify that the condition \eqref{NEW2} guarantees  that $e^{-Q/2}$ can be expanded to an unitary operator $W$ in the Hilbert space
$\Hil=\K\oplus\M$ such that $e^{-Q/2}=PW|_{\K}$.
It is easy to see that the operator
\begin{equation}\label{eq:ww}
W=\left[\begin{array}{cc}
e^{-Q/2} &  (I-e^{-Q})^{1/2} \\
(I-e^{-Q})^{1/2}  &  -e^{-Q/2} 
\end{array}\right],
\end{equation}
is unitary in the Hilbert space $\K\oplus\overline{\mathcal{R}(I-e^{-Q})}$ and $e^{-Q/2}=PW|_{\K}$.  If  \eqref{NEW2} holds, then
$\overline{\mathcal{R}(I-e^{-Q})}$ can be considered as a subspace of $\M$.  Defining $W$ as the identity operator
on $\M\ominus\overline{\mathcal{R}(I-e^{-Q})}$ we obtain the required unitary operator in $\Hil$.
\end{proof}

\begin{cor}\label{DDD2b}
A dual frame $\F_\psi=e^{Q/2}\F_{e'}$  of $\F_e$ considered in Theorem \ref{NEW1} consists of vectors
\begin{equation}\label{AGHWMS101}
\psi_j=e_j+e^{Q/2}W_{12}m_j,  \qquad j\in\mathbb{J}.
\end{equation}
where $W_{12}:\M\to\K$ is a part of an unitary operator \eqref{DDD1} and $\{m_j\}$ 
is the complementary PF of $\F_e$ in $\M$  (see \eqref{K3b}). 
The canonical dual frame of $\F_e$ corresponds to the operator $Q=0$ in \eqref{AGHWMS101}.
\end{cor}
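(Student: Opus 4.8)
The plan is to read off the formula directly from the dilation data constructed in the proof of Theorem \ref{NEW1}, so the argument is essentially a block-matrix computation with no real obstacle; the only thing to be careful about is bookkeeping of which space each object lives in.

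First I would recall the setup established in Theorem \ref{NEW1}. Starting from the PF $\F_e$, Naimark's theorem (Theorem \ref{Naimark}) produces a complementary PF $\F_{\tilde e}=\{\tilde e_j\}$ in $\M$ so that $\F_{\mathbf e}=\{\mathbf e_j=e_j\oplus\tilde e_j\}$ is an orthonormal basis of $\Hil=\K\oplus\M$. Writing the dual frame as $\F_\psi=e^{Q/2}\F_{e^o}$, the proof of Theorem \ref{NEW1} exhibits a unitary $W$ on $\Hil$ with $\F_{e^o}=P\F_{\mathbf e^o}$, where $\F_{\mathbf e^o}=\{\mathbf e_j^o=W\mathbf e_j\}$ and $P$ is the orthogonal projection onto $\K$; moreover, with respect to $\Hil=\K\oplus\M$, the operator $W$ has the block form \eqref{DDD1} whose top-left corner is $e^{-Q/2}$.

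Next I would carry out the computation. Applying \eqref{DDD1} to $\mathbf e_j=e_j\oplus\tilde e_j$ gives
\begin{equation*}
\mathbf e_j^o=W\mathbf e_j=\bigl(e^{-Q/2}e_j+W_{12}\tilde e_j\bigr)\oplus\bigl(W_{21}e_j+W_{22}\tilde e_j\bigr),
\end{equation*}
so that $e_j^o=P\mathbf e_j^o=e^{-Q/2}e_j+W_{12}\tilde e_j$. Multiplying by $e^{Q/2}$ and using $e^{Q/2}e^{-Q/2}=I$ yields
\begin{equation*}
\psi_j=e^{Q/2}e_j^o=e_j+e^{Q/2}W_{12}\tilde e_j,\qquad j\in\mathbb J,
\end{equation*}
which is \eqref{AGHWMS101}. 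I would close by remarking that $W_{12}:\M\to\K$ is exactly the off-diagonal block of the unitary dilation in \eqref{DDD1}, and that different admissible choices of $W$ (equivalently of $W_{12}$) correspond to the different dual frames, so the corollary is a completely explicit restatement of Theorem \ref{NEW1}; in particular the canonical dual is recovered when $W_{12}=0$. The only potential subtlety — making sure the space $\Hil^o$ dilating $\F_{e^o}$ can be identified with $\Hil$ dilating $\F_e$ — has already been dispatched in the proof of Theorem \ref{NEW1} via the cardinality count $\dim\Hil=|\mathbb J|=\dim\Hil^o$.
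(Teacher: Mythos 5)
Your argument is exactly the paper's: the authors also obtain $e^o_j=PW\mathbf{e}_j=e^{-Q/2}e_j+W_{12}\tilde{e}_j$ from the block form \eqref{DDD1} applied to $\mathbf{e}_j=e_j\oplus\tilde{e}_j$ and then multiply by $e^{Q/2}$. Your write-up just makes the intermediate block computation explicit, so it is correct and essentially identical in approach.
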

\begin{proof} In view of the proof above and \eqref{K3b}, the vectors $e_j'$ of the PF $\F_{e'}$
have the form $e_j'=PWh_j=e^{-Q/2}e_j+W_{12}m_j$ that justifies \eqref{AGHWMS101}. The operator $Q=0$ 
satisfies \eqref{NEW2} and $T=e^{-Q/2}$ is the identity operator in $\K$. This means that the unitary dilation
$W$ of $T=I$ in $\Hil$ has the form \eqref{DDD1}, where $W_{12}=W_{21}=0$. Applying \eqref{AGHWMS101} we obtain
that the dual frame $\F_\psi$ coincides with $\F_e$. Therefore the zero operator $Q$ determines 
the canonical dual frame of $\F_e$.
\end{proof}

Theorem \ref{NEW1} explains which conditions satisfy the operator $Q$ in the formula $\F_\psi=e^{Q/2}\F_{e'}$ for dual frames.  Analogously, the second counterpart of this formula --  a PF $\F_{{e'}}$ cannot be selected arbitrarily and its choice should satisfy certain restrictions.
 \begin{cor}\label{AGHWMS50}
 Let  $\F_{{e'}}$ be a PF in $\K$. The following are equivalent: 
\begin{enumerate}
\item[(a)] there exists a nonnegative operator $Q$ such that the formula
$\F_\psi=e^{Q/2}\F_{e'}$ determines a dual frame for $\F_e$;
\item[(b)] the excess of 
$\F_{{e'}}$  coincides with the excess of the original PF $\F_e$ and the operator $PW|_{\K}$, where $W$ is defined by
\eqref{AGHWMS29},  is  strongly positive in $\K$.
\end{enumerate}
 \end{cor}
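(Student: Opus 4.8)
The plan is to prove the two implications separately, in both cases using the dilation machinery from the proof of Theorem \ref{NEW1}. Fix, by the Naimark theorem, an extension of $\F_e$ to an orthonormal basis $\F_{\mathbf{e}}=\{\mathbf{e}_j\}$ of $\Hil=\K\oplus\M$, so that $\dim\M=e[\F_e]$ by Remark \ref{rem2}, and an extension of $\F_{e^o}$ to an orthonormal basis $\F_{\mathbf{e}^o}=\{\mathbf{e}^o_j\}$ of a space $\Hil^o=\K\oplus\M^o$, so that $\dim\M^o=e[\F_{e^o}]$. Since both bases are indexed by $\mathbb{J}$, formula \eqref{AGHWMS29} defines a unitary operator $W\colon\Hil\to\Hil^o$, $W\mathbf{e}_j=\mathbf{e}^o_j$; let $P$ be the orthogonal projection of $\Hil^o$ onto $\K$ and set $T:=PW|_{\K}\colon\K\to\K$, which is a contraction since $\|P\|\le 1$ and $W$ is unitary. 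Exactly as in the proof of Theorem \ref{NEW1}, for every $f\in\K$ one has $\sum_j(f,e_j)\psi_j=e^{Q/2}PWf=e^{Q/2}Tf$ as soon as $\F_\psi=e^{Q/2}\F_{e^o}$; this identity is the bridge between (a) and (b).

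For the implication (a)$\Rightarrow$(b): if $\F_\psi=e^{Q/2}\F_{e^o}$ is a dual frame of $\F_e$ with $Q\ge 0$ bounded, the bridge identity gives $e^{Q/2}T=I$, hence $T=e^{-Q/2}$; since $0\le Q$ is bounded, $T\ge e^{-\|Q\|/2}I$, so $PW|_{\K}$ is strongly positive, which is one half of (b). For the excess equality I use that $T$ is now invertible: this forces $P|_{W\K}\colon W\K\to\K$ to be bijective, so $W\K\cap\M^o=\{0\}$ and $W\K+\M^o=\Hil^o$, i.e.\ $\Hil^o$ is the topological direct sum of the closed subspaces $W\K$ and $\M^o$. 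Consequently $\dim\M^o=\dim(W\K)^{\perp}=\dim W\M=\dim\M$, which by Remark \ref{rem2} means $e[\F_{e^o}]=e[\F_e]$.

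For the implication (b)$\Rightarrow$(a): strong positivity of $T=PW|_{\K}$ means $T=T^*$ and $T\ge cI$ for some $c>0$; together with $\|T\|\le 1$ this places the spectrum of $T$ in $[c,1]$, so $Q:=-2\ln T$ is a bounded nonnegative operator with $e^{-Q/2}=T$. Put $\F_\psi:=e^{Q/2}\F_{e^o}$. As in the converse part of Theorem \ref{NEW1}, $\F_\psi$ is a Bessel sequence because $\sum_j|(f,\psi_j)|^2=\|e^{Q/2}f\|^2\le\|e^{Q/2}\|^2\|f\|^2$, while the bridge identity gives $\sum_j(f,e_j)\psi_j=e^{Q/2}Tf=f$ for all $f\in\K$; hence $\F_\psi$ is a dual frame of $\F_e$, and (a) holds. (The hypothesis $e[\F_{e^o}]=e[\F_e]$ only keeps the two Naimark pictures on equal footing; by the previous paragraph it is in fact a consequence of strong positivity of $PW|_{\K}$, and is retained here for symmetry with Theorem \ref{NEW1}.)

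The one step going genuinely beyond Theorem \ref{NEW1} is the dimension count $\dim\M=\dim\M^o$ in (a)$\Rightarrow$(b): one must upgrade the one-sided bound \eqref{NEW2} to an equality, and the decisive input is the \emph{invertibility}, not merely the contractivity, of $PW|_{\K}$, which splits $\Hil^o$ as $W\K\dotplus\M^o$. Everything else is routine, provided "strongly positive" is read as $PW|_{\K}\ge cI$ with $c>0$, which is precisely what makes $Q=-2\ln(PW|_{\K})$ bounded.
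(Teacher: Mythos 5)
Your proof is correct, and the duality verifications in both directions follow the same dilation machinery as the paper (the ``bridge identity'' $\sum_j(f,e_j)\psi_j=e^{Q/2}PWf$, giving $PW|_{\K}=e^{-Q/2}$ in one direction and the reconstruction formula in the other). Where you genuinely diverge is the excess equality in (a)$\Rightarrow$(b): the paper obtains $e[\F_{e^o}]=e[\F_e]$ by quoting the external result of \cite{BB} that every dual frame has the same excess as the original frame, combined with Lemma \ref{WW1b} ($e[\F_\psi]=e[\F_{e^o}]$), whereas you derive it internally from the invertibility of $T=PW|_{\K}$: bijectivity of $P|_{W\K}$ yields the splitting $\Hil^o=W\K\dotplus\M^o$, hence $\dim\M^o=\dim(W\K)^\perp=\dim W\M=\dim\M$, and Remark \ref{rem2} converts this into the excess equality. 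Your route is self-contained, stays entirely inside the Naimark picture, and upgrades the one-sided bound \eqref{NEW2} to an equality without any frame-theoretic black box; the paper's route is shorter but imports \cite{BB}. A further payoff of your argument is the observation that the excess condition in (b) is actually implied by strong positivity of $PW|_{\K}$ and is therefore redundant --- something the paper's proof, which uses that condition to identify the two auxiliary spaces $\M$ and $\M^o$ before $W$ can be regarded as a unitary \emph{in} $\Hil$, does not reveal. (Your device of keeping $W$ as a unitary between the two dilation spaces $\Hil\to\Hil^o$ neatly sidesteps that identification issue as well.)
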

 \begin{proof} $(a)\to(b)$. In view of \cite[Theorem 2.2]{BB}, the excess of a dual frame $\F_\psi$ coincides with the excess of $\F_e$.
 Due to Lemma \ref{WW1b},  $e[\F_\psi]=e[\F_{e'}]$ that gives $e[\F_{e'}]=e[\F_e]$.  By the proof of Theorem \ref{NEW1},
 the operator $PW|_{\K}$ coincides with $e^{-Q/2}$ and, hence, it is strongly positive.
 
 $(b)\to(a)$. Since $e[\F_{e'}]=e[\F_e]$, the auxiliary subspaces $\M$  for PF's $\F_{e'}$ and $\F_{e}$ in Theorem \ref{Naimark}   have the same
 dimension and, therefore these subspaces can be identified. This means that the unitary operator $W$ in \eqref{AGHWMS29} is well posed.
 Denoting $e^{-Q/2}=PW|_{\K}$ and repeating the proof of Theorem \ref{NEW1} we complete the proof.
\end{proof}
 
 \subsection{Dual frames for a PF with finite potential and excess one}\label{Sec2.3}
 Let  $\F_e$ be a PF in a Hilbert space $\K$ with ${\bf FP}[\F_e]=K\in\mathbb{N}$ and $e[\F_e]=1$.
 This means that $\dim{\K}=K$ and $\dim\Hil=K+1$, where $\Hil=\K\oplus\M$ is a Hilbert space in Theorem \ref{Naimark} and
 $|\mathbb{J}|=K+1$. Without loss of generality, we assume that $\mathbb{J}=\{1, 2\ldots, K+1\}$,  $\K=\mathbb{C}^K$, 
 $\M=\mathbb{C}$ and
 $\Hil=\mathbb{C}^{K}\oplus\mathbb{C}=\mathbb{C}^{K+1}$.  

Our aim is the description of all dual frames of $\F_e$. Due to Theorem \ref{NEW1}, one should consider a
nonnegative operator $Q$. Denote $T=e^{-Q/2}$. By the construction, $T$ is a positive contraction operator in $\mathbb{C}^K$
and the inequality  \eqref{NEW2} takes the form
$$
 \dim\mathcal{R}(I-e^{-Q})=\dim\mathcal{R}(I-T)(I+T)=\dim\mathcal{R}(I-T)\leq{e[\F_e]}=1.
 $$
 
 If $\dim\mathcal{R}(I-T)=0$, then $I=T$ and $Q=0$ that corresponds to the case of canonical dual frame (Corollary \ref{DDD2b}). Assume that $\dim\mathcal{R}(I-T)=1$. In this case, the positive 
contraction $T$ has a simple eigenvalue $0<\varepsilon<{1}$ and the eigenvalue $1$ of the multiplicity $K-1$.
A rudimentary linear algebra exercise leads to the conclusion that such operators $T$ in $\mathbb{C}^K$
are described by the matrices depending on  the choice of  a normalized eigenvector
$u=(u_1, u_2, \ldots u_K)^t$ of $T$ corresponding to the eigenvalue $\varepsilon$: 
\begin{equation}\label{AGHWMS1}
T=e^{-Q/2}=\left[\begin{matrix} 1+(\varepsilon-1)|u_1|^2  &   (\varepsilon-1){u_1}\overline{u}_2  & \ldots  & (\varepsilon-1){u_1}\overline{u}_K \\
(\varepsilon-1){u_2}\overline{u}_1  &  1+(\varepsilon-1)|u_2|^2 &  \ldots  & (\varepsilon-1){u_2}\overline{u}_K \\
\vdots &  \vdots & \vdots  & \vdots  \\
(\varepsilon-1){u_K}\overline{u}_1  &  (\varepsilon-1){u_K}\overline{u}_2 &  \ldots  &  1+(\varepsilon-1)|u_K|^2
\end{matrix}\right].
\end{equation}

The choice of $\varepsilon=1$ in \eqref{AGHWMS1} corresponds to the case $T=I$ (i.e., the case of canonical dual frame). For this reason, in what follows, we assume that $0<\varepsilon\leq{1}$. 

According to Corollary \ref{DDD2b}, each dual frame $\F_\psi$ 
is described by \eqref{AGHWMS101}, where
$W_{12}$ is a component of an unitary dilation \eqref{DDD1} of
$T$.  It is easy to check that required unitary operators in $\mathbb{C}^{K+1}=\mathbb{C}^K\oplus\mathbb{C}$ are 
determined by the matrices:
\begin{equation}\label{HH3b}
W=\left[\begin{matrix}
1+(\varepsilon-1)|u_1|^2  &   (\varepsilon-1){u_1}\overline{u}_2  & \ldots  & (\varepsilon-1){u_1}\overline{u}_K   &  \sqrt{1- \varepsilon^2}u_1e^{i\theta} \\
(\varepsilon-1){u_2}\overline{u}_1  &  1+(\varepsilon-1)|u_2|^2 &  \ldots  & (\varepsilon-1){u_2}\overline{u}_K  & \sqrt{1- \varepsilon^2}u_2e^{i\theta} \\
\vdots &  \vdots & \vdots  & \vdots  & \vdots \\
(\varepsilon-1){u_k}\overline{u}_1  &  (\varepsilon-1){u}_k\overline{u}_2 &  \ldots  &  1+(\varepsilon-1)|u_K|^2 & \sqrt{1- \varepsilon^2}u_Ke^{i\theta}\\
\sqrt{1- \varepsilon^2}\overline{u}_1e^{i\tilde{\theta}} &  \sqrt{1- \varepsilon^2}\overline{u}_2e^{i\tilde{\theta}} & \ldots  & \sqrt{1- \varepsilon^2}\overline{u}_Ke^{i\tilde{\theta}}  & -{\varepsilon}e^{i(\theta+\tilde{\theta})}
\end{matrix}\right],
\end{equation}
where $\theta, \tilde{\theta}\in[0, 2\pi)$.  

By virtue of \eqref{DDD1} and \eqref{HH3b}, the operator $W_{12} : \M=0\oplus\mathbb{C}\to\K=\mathbb{C}^{K}\oplus{0}$ acts as the multiplication by
$\sqrt{1- \varepsilon^2}{e^{i\theta}}\left[\begin{array}{c}
u_1 \\
\vdots \\
u_k
\end{array}
\right]$. According to  \eqref{AGHWMS101},  a dual frame $\F_\psi=\{\psi_1, \ldots \psi_{K+1}\}$
consists of the vectors: 
$$
\psi_j=e_j+T^{-1}W_{12}m_j=e_j+\sqrt{1- \varepsilon^2}{e^{i\theta}}T^{-1}\left[\begin{array}{c}
u_1 \\
\vdots \\
u_k
\end{array}
\right]m_j,
$$
where $\{m_j\}$ is the complementary PF in $\mathbb{C}$ for $\F_e$  (see \eqref{K3b}). 
The obtained expression can be further simplified by taking into account the following observations:
$u=(u_1, \ldots u_K)^t$ is an eigenvector of $T^{-1}$ corresponding to the eigenvalue $1/\varepsilon$; 
the parameter ${e^{i\theta}}$ can be omitted if $u$ is assumed to be an arbitrary normalized vector in $\mathbb{C}^K$. Finally,  we obtain the following description of all dual frames  $\F_\psi=\{\psi_1, \ldots \psi_{K+1}\}$
of $\F_e$:
\begin{equation}\label{HH6}  
\psi_j=e_j+\frac{\sqrt{1-\varepsilon^2}}{\varepsilon}\left[\begin{array}{c}
u_1 \\
\vdots \\
u_K
\end{array}
\right]m_j, \qquad j=1, 2\ldots, K+1,
\end{equation}
where $0<\varepsilon\leq{1}$ and $(u_1, \ldots u_K)^t$ is an arbitrary normalized vector in $\mathbb{C}^K$.

The parameter  $\varepsilon$ in \eqref{HH6} determines the frame bounds of $\F_\psi$: $A={1}/{\|e^{-Q/2}\|^2}=1$ and   
$B=\|e^{Q/2}\|^2=\frac{1}{\varepsilon^2}$. Moreover, the potential of $\F_\psi$ is expressed
in terms of $\varepsilon$:
$$
{\bf FP}[\F_\psi]=K-1+(1/\varepsilon)^4.
$$
The last equality follows from \eqref{AGHWMS2} and the fact
that  the operator $T^{-1}=e^{Q/2}$ has eigenvalue $1$ of multiplicity $K-1$ and an eigenvalue $1/\varepsilon$.
The canonical dual frame corresponds to the case $\varepsilon=1$. For a given $0<\varepsilon<1$ the formula
\eqref{HH6} presents dual frames with the same potential and optimal bounds that are determined by 
the variation of normalized vector $u\in\mathbb{C}^K$.  

\begin{rem}
 The above approach can  be used for the construction of dual frames of a PF  with an arbitrary \emph{finite} potential and
 excess. For example, if   ${\bf FP}[\F_e]=n{\geq}m=e[\F_e]$, then the $n\times{n}$-matrix in \eqref{AGHWMS1} will be determined by 
 $m$ orthonormal eigenvectors $\{u(j)\in\mathbb{C}^n\}_{j=1}^m$ corresponding to eigenvalues $0<\varepsilon_1\leq\varepsilon_2\leq\ldots\leq\varepsilon_m\leq{1}$.
 Similarly, $(n+m)\times{(n+m)}$-unitary matrix \eqref{HH3b}  will contain more parameters.
\end{rem}
 
For a concrete PF $\F_e$ one can specify the complementary PF 
 $\{m_j\}$ in \eqref{HH6}.  
\begin{example} 
 Dual frames for Mercedes frame.
 \end{example}
 In  the Hilbert space $\Hil=\mathbb{C}^3$, we consider the orthonormal basis
$\F_{h}=\{h_1, h_2, h_3\}$, where 
$$
h_1=\sqrt{\frac{2}{3}}\left[
\begin{array}{c}
1 \\
0 \\
\frac{1}{\sqrt{2}}\\
\end{array}
\right], \qquad h_2=\sqrt{\frac{2}{3}}\left[
\begin{array}{c}
-\frac{1}{2} \\
\frac{\sqrt{3}}{2} \\
\frac{1}{\sqrt{2}} 
\end{array}
\right], \qquad h_3=\sqrt{\frac{2}{3}}\left[
\begin{array}{c}
-\frac{1}{2} \\
-\frac{\sqrt{3}}{2} \\
\frac{1}{\sqrt{2}}
\end{array}
\right].
$$

The orthogonal projection of $\F_h$ onto the subspace $\K=\mathbb{C}^2\oplus{0}$
determines  a PF $\F_e=\{e_j\}$, 
$$
e_1=\sqrt{\frac{2}{3}}\left[
\begin{array}{c}
1 \\
0 
\end{array}
\right], \qquad e_2=\sqrt{\frac{2}{3}}\left[
\begin{array}{c}
-\frac{1}{2} \\
\frac{\sqrt{3}}{2} 
\end{array}
\right], \qquad e_3=\sqrt{\frac{2}{3}}\left[
\begin{array}{c}
-\frac{1}{2} \\
 -\frac{\sqrt{3}}{2} 
\end{array}
\right],
$$
which is called the Mercedes frame \cite[p. 204]{heil}.  

By the construction, the excess of  $\F_e$ is $1$,  the potential ${\bf FP}[\F_e]=2$, and the
complementary PF 
$\F_m=\{m_1, m_2, m_3\}$ in \eqref{K3b} consists of the vectors (numbers) 
$m_j=(I-P)h_j=\frac{1}{\sqrt{3}}$ in $\M=\mathbb{C}$.

 Denoting normalized vectors $u\in\mathbb{C}^2$ as
$$
u=\left[\begin{array}{c}
e^{i\beta}\cos\alpha \\
e^{i\gamma}\sin\alpha
\end{array}
\right],  \qquad  \alpha, \beta, \gamma\in[0, 2\pi)
$$ 
and using \eqref{HH6} we obtain the description of all dual frames $\F_\psi=\{\psi_1, \psi_2, \psi_3  \}$ of the Mercedes frame $\F_e$ 
$$ 
\psi_j=e_j+\frac{\sqrt{1-\varepsilon^2}}{\sqrt{3}\varepsilon}\left[\begin{array}{c}
e^{i\beta}\cos\alpha \\
e^{i\gamma}\sin\alpha
\end{array}
\right] 
$$
that involves four parameters: $\alpha, \beta, \gamma\in[0, 2\pi)$ and $0<\varepsilon\leq{1}$. The dual frame $\F_\psi$ has the frame bounds $A=1$, $B=\frac{1}{\varepsilon^2}$.
The potential of $\F_\psi$ is $1+\frac{1}{\varepsilon^4}$. 

\begin{example} 
 SIC-POVM  represented by the Bloch-Sphere.
 \end{example}
SIC-POVM's (symmetric informationally-complete positive operator value measures) are an important class of generalized measure that are used in quantum measurement
 theory \cite{RBSC}. 
For the Hilbert space $\mathbb C^n$, it is defined by a set of $n^2$  normalized  vectors $\{e_j\}_{j=1}^{n^2} \subset \mathbb C^n$,  with  a fixed inner product, 
$|(e_i,e_j)|^2_{i\not=j}=\frac{1}{n+1}$ . We consider $n=2$ and the qubit $\left[\begin{array}{c}
1 \\
0 
\end{array}
\right] \in \mathbb C^2$, due to the properties of SIC-POVM, one can associate 3 vectors from $\mathbb C^2$ to  obtain  the following SIC-POVM 
$$
e_1=\left[
\begin{array}{c}
1 \\
0 
\end{array}
\right], \qquad e_2=\left[
\begin{array}{c}
\frac{1}{\sqrt{3}} \\
\sqrt{\frac{2}{3}} 
\end{array}
\right], \qquad e_3=\left[
\begin{array}{c}
\frac{1}{\sqrt{3}}  \\
 \sqrt{\frac{2}{3}}e^{2i \pi/3}
\end{array}
\right],
\qquad e_4=\left[
\begin{array}{c}
\frac{1}{\sqrt{3}}  \\
 \sqrt{\frac{2}{3}}e^{-2i \pi/3}
\end{array}
\right].
$$

 In order to  visualize SIC-POVM in $\mathbb C^2$  we use the Bloch-Sphere.  For a quantum state $e_i=\left[
\begin{array}{c}
{e_{i,1}} \\
{e_{i,2}} 
\end{array}
\right]$, a density operator $\rho$ is defined as follows
\begin{equation}\label{eq:sicpovm1}
 \rho= \left[\begin{matrix}|e_{i,1}|^2 & e_{i,1}\overline{e}_{i,2} \\
 e_{i,2}\overline{e}_{i,1} & |e_{i,2}|^2 \end{matrix}\right].
\end{equation}

On the other hand, $\rho$ can be expressed via the Pauli matrices and the identity matrix  
\begin{equation}\label{eq:sicpovm2}
\rho=\frac{1}{2}\left( \left[\begin{matrix}1 & 0\\0 & 1\end{matrix}\right]+x  \left[\begin{matrix}0 & 1\\1 & 0\end{matrix}\right]+y  \left[\begin{matrix}0 & - i\\i & 0\end{matrix}\right]+z \left[\begin{matrix}1 & 0\\0 & -1\end{matrix}\right]\right).
\end{equation}
   Combining (\ref{eq:sicpovm1}) and (\ref{eq:sicpovm2}) and iterating for $j=1,\dots,4$, 
   each solution $x,y,z$ gives rise to a vector $f_j$ from the Bloch-Sphere  corresponding to $e_j$ \cite[Chapter 1.1.1]{MR}:
 
$$ f_1=\left[\begin{matrix}0\\0\\1\end{matrix}\right], \qquad f_2=  \left[\begin{matrix}\frac{2 \sqrt{2}}{3}\\0\\- \frac{1}{3}\end{matrix}\right], \qquad f_3=   \left[\begin{matrix}- \frac{\sqrt{2}}{3}\\\frac{\sqrt{6}}{3}\\- \frac{1}{3}\end{matrix}\right], \qquad f_4=  \left[\begin{matrix}- \frac{\sqrt{2}}{3}\\- \frac{\sqrt{6}}{3}\\- \frac{1}{3}\end{matrix}\right].
 $$
 
 Simple calculation shows that $\F_f=\{f_j\}_{j=1}^4$ is a $\frac{4}{3}$-tight frame with the excess $e[\F_f]=1$. 
The complementary PF $\{m_j\}_{j=1}^4$   for the PF $\F_e=\frac{\sqrt{3}}{2}\F_f$ consists of vectors:
$m_1=\ldots{m}_4=\frac{1}{2}$. Dual frames for $\F_e$ are described by  (\ref{HH6}). 
Multiplying this expression by $\frac{\sqrt{3}}{2}$ we obtain dual frames
 $\F_\psi=\{\psi_j\}_{j=1}^4$ for $\F_f$: 
$$
\psi_j=\frac{\sqrt{3}}{2}\left(e_j+\frac{\sqrt{1-\varepsilon^2}}{2\varepsilon}u\right),
$$
where $u=(u_1, u_2, u_3)^t$ is an arbitrary normalized vector in $\mathbb{C}^3$.

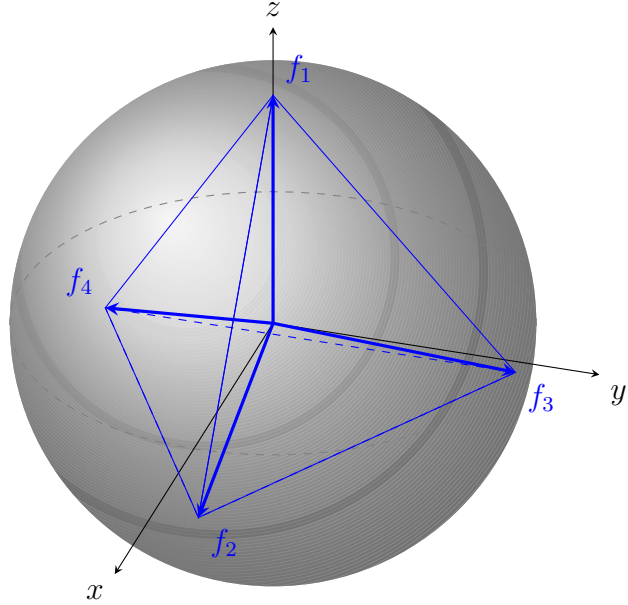
\begin{figure}
\centering
\begin{tikzpicture}[tdplot_main_coords, scale = 3.5, rotate around z=7.5]
\coordinate (Q) at ({0},{0},{1}); 
\coordinate (R) at ({2*sqrt(2)/3},{0},{-1/3}); 
\coordinate (S) at ({-sqrt(2)/3},{sqrt(6)/3},{-1/3}); 
\coordinate (T) at ({-sqrt(2)/3},{-sqrt(6)/3},{-1/3});

\shade[ball color = lightgray, opacity = 0.4, thick] (0,0,0) circle (1cm);
 
\tdplotsetrotatedcoords{0}{0}{0};
\draw[dashed, tdplot_rotated_coords, gray] (0,0,0) circle (1);
 
\draw[-stealth] (0,0,0) -- (2.0,0,0) node[below left] {$x$};
\draw[-stealth] (0,0,0) -- (0,1.30,0) node[below right] {$y$};
\draw[-stealth] (0,0,0) -- (0,0,1.30) node[above] {$z$};

\draw[very thick, -stealth,color=blue] (0,0,0) -- (Q) node[above right] {$f_1$};
\draw[very thick, -stealth,color=blue] (0,0,0) -- (R) node[below right] {$f_2$};
\draw[very thick, -stealth,color=blue] (0,0,0) -- (S) node[below right] {$f_3$};
\draw[very thick, -stealth,color=blue] (0,0,0) -- (T) node[above left] {$f_4$};
\draw[color=blue] (Q) -- (R) -- (S) -- cycle;
\draw[color=blue ] (R) -- (Q) -- (T) -- cycle;
 \draw[color=blue, dashed ] (S) -- (T) -- (R) -- cycle;
\end{tikzpicture}
\caption{Four vectors of SIC-POVM form a tetrahedron inside the Bloch-Sphere.}
 \end{figure}
  
  \subsection{Dual frames for the Casazza-Christensen frame}\label{sec2.4b}
In general, a PF with infinite excess need not contain a Riesz basis as a subset.  It is easy to construct a frame which does not contain a Riesz basis
if one allows a subsequence of the frame elements to converge to $0$ in norm. For example,  if
 $\{b_j\}_{j=1}^\infty$ is an orthonormal basis, then the PF  
 $$
 \F_{b}=\{ b_1, \frac{1}{\sqrt{2}}b_2, \frac{1}{\sqrt{2}}b_2,  \frac{1}{\sqrt{3}}b_3, \frac{1}{\sqrt{3}}b_3, \frac{1}{\sqrt{3}}b_3, \ldots \}
 $$
 does not contain a Riesz basis at all. The only candidate would be $\{\frac{1}{\sqrt{j}}b_j\}_{j=1}^\infty$,  which is a Schauder basis but not a Riesz basis.
 It turns out that there exist PF's which consist of vectors that are norm bounded below, but which do not contain a Shauder basis and, therefore they do not contain
a Riesz basis. The relevant curious example was constructed by Casazza and Christensen \cite{CC98}. 
Aiming to describe dual frames for this PF we begin with the description of duals for  finite components of the Casazza-Christensen frame.

 Let $\{b_j\}_{j=1}^K$, $K\in\mathbb{N}$ be an orthonormal basis of a Hilbert space $\K$.
 The set $\F_{e}=\{e_j, j\in\mathbb{J}\}$, where $\mathbb{J}=\{1, 2,\ldots, K+1\}$ and
\begin{equation}\label{K61}
e_j=b_j-\frac{1}{K}\sum_{i=1}^{K}b_i,  \quad 1\leq{j}\leq{K}, \qquad e_{K+1}=\frac{1}{\sqrt{K}}\sum_{i=1}^{K}{b}_i,
\end{equation}
is a PF in $\K$ \cite[Lemma 2.5]{CC98}.  It is easy to see that ${\bf FP}[\F_e]=K$  and  $e[\F_e]=1$. Hence, the dimension of the auxiliary space
$\M$ in  Theorem \ref{Naimark} is $1$ and, without loss of generality we may assume
that $\M=\mathbb{C}$ and $\Hil=\K\oplus\mathbb{C}$.  

It follows from \eqref{K61} that $\|e_j\|^2=1-\frac{1}{K}$, \  $\|e_{K+1}\|^2=1$, and
$$
(e_j, e_p)=-\frac{1}{K}, \quad j\not=p\in\{1,\ldots, K\}; \qquad (e_j, e_{K+1})=0, \quad j\leq{K}.
$$ 
 For this reason, the complementary PF 
$\F_m=\{m_1, \ldots, m_{K+1}\}$ in \eqref{K3b} consists of the vectors 
\begin{equation*}
m_j=\frac{1}{\sqrt{K}}, \quad j=1,\ldots,K,   \qquad {m}_{K+1}=0.
\end{equation*}

By virtue of \eqref{HH6}, dual frames $\F_\psi=\{\psi_1, \ldots \psi_{K+1}\}$ for $\F_e$ are formed by  the vectors: 
\begin{equation}\label{AGHWMS14}
\psi_j=e_j+\frac{\sqrt{1-\varepsilon^2}}{\sqrt{K}\varepsilon}u, \quad j=1,\ldots,K,  \qquad \psi_{K+1}=e_{K+1},
\end{equation}
where $u$ is an arbitrary normalized vector in $\K$.

Let $\K$ be a separable Hilbert space.  Index an orthonormal basis for $\K$ as $\{b_j^K\}_{j=1}^K$, where $K$ runs the set of natural numbers and 
put $\K_K=\mbox{span}\{b_1^K, b_2^K, \ldots b_K^K\}$.  The vectors $e_j^K\equiv{e_j}$ defined by \eqref{K61}
form a PF $\F_{e^K}=\{e_j^K, \ j=1, 2\ldots{K+1}\}$ of the Hilbert space $\K_K$.
Since $\K=\sum_{K=1}^\infty\oplus\K_K$, the collection of vectors 
\begin{equation}\label{GGG}
\F_{e}=\{e_1^K, e_2^K,\ldots, e_{K+1}^K\}_{K=1}^{\infty}=\bigcup_{K=1}^\infty\F_{e^K}
\end{equation}
is a PF for $\K$. It was shown  \cite{CC98} that this PF does not contain a Riesz/Schauder basis.  

Denote by $\F_\psi^K=\{\psi_1^K, \ldots \psi_{K+1}^K\}$ a dual frame for $\F_{e^K}$ in $\K_K$. 
By analogy with \eqref{AGHWMS14},
$$
\psi_j^K=e_j^K+\frac{\sqrt{1-\varepsilon^2_K}}{\sqrt{K}\varepsilon_K}u_K,  \quad \psi_{K+1}^K=e_{K+1}^K, \quad u_K\in\K_K, \ \|u_K\|=1.
$$

It follows from \eqref{GGG} that $\F_{\psi}=\{\psi_1^K, \psi_2^K,\ldots, \psi_{K+1}^K\}_{K=1}^{\infty}=\bigcup_{K=1}^\infty\F_{\psi^K}$
is a dual frame for  $\F_{e}$ in $\K$.
 
\subsection{Dual  frames for a general frame}\label{sec2.5}
Our aim is the description of dual frames for a general frame
$\F_\varphi$. The similar problem was considered in Section \ref{sec2.2}
for PF's. The following technical statement reduces the study of general case
to the results of Section \ref{sec2.2}.

Let  $\F_\varphi=\{\varphi_j, j\in\mathbb{J} \}$ be a frame in $\K$. 
By Proposition \ref{WW1}, $\F_{\varphi}=e^{Q_\varphi/2}\F_e$, where
$Q_\varphi$ is a bounded self-adjoint operator and $\F_e=\{e_j,  j\in\mathbb{J}\}$
is a PF in $\K$.
\begin{lemma}\label{NNN950}
The following are equivalent:
\begin{enumerate}
\item[(i)] a frame $\F_\psi=\{\psi_j, j\in\mathbb{J} \}$ is dual for  $\F_\varphi=e^{Q_\varphi/2}\F_{e}$;
\item[(ii)] a frame $e^{Q_\varphi/2}\F_\psi$ is 
dual for the Parseval frame $\F_{e}$.
\end{enumerate}
\end{lemma}
\begin{proof}
 The relation
$$
f=\sum(f, \varphi_j)\psi_j=\sum(f, e^{Q_\varphi/2}e_j)\psi_j=\sum(e^{Q_\varphi/2}f, e_j)\psi_j=\sum(g, e_j)\psi_j=e^{-Q_\varphi/2}g,
$$
where $g=e^{Q_\varphi/2}f$ implies $g=\sum(g, e_j)e^{Q_\varphi/2}\psi_j$ that justifies
$(i)\to(ii)$. The implication $(ii)\to(i)$ is proved similarly.
\end{proof}

Lemma \ref{NNN950} allows one to generalize  Corollary \ref{DDD2b}
for the case of an arbitrary frame $\F_\varphi$.

\begin{thm}\label{thm5}
 Given a frame $\F_\varphi=e^{Q_\varphi/2}\F_{e}$.  Each dual frame $\F_\psi$
 of $\F_\varphi$ consists of the vectors
\begin{equation}\label{NNN960}
\F_\psi=\{\psi_j=e^{-Q_\varphi/2}e_j+e^{-Q_\varphi/2}e^{Q/2}W_{12}m_j, \ j\in\mathbb{J}\},
\end{equation}
where $Q$ is a bounded nonnegative operator in $\K$ such that
\begin{equation}\label{NNN961}
\dim\mathcal{R}(I-e^{-Q})\leq{e[\F_\varphi]},
\end{equation}
where $W_{12}:\M\to\K$ is a part of an unitary operator \eqref{DDD1} and $\{m_j\}$ 
is the complementary PF of $\F_e$ in $\M$ (see \eqref{K3b}).
 \end{thm}
\begin{proof} 
 Let $\F_\psi$ be a dual frame  of $\F_\varphi=e^{Q_\varphi/2}\F_{e}$. 
 In view of  Lemma \ref{NNN950},  $e^{Q_\varphi/2}\F_\psi$ 
 is a dual frame for the PF $\F_{e}$. By Corollary \ref{DDD2b}, the frame 
 $e^{Q_\varphi/2}\F_\psi$ is formed by the vectors
$$
e^{Q_\varphi/2}\psi_j=e_j+e^{Q/2}W_{12}m_j,
$$
where a bounded nonnegative operator $Q$ satisfies  
  \eqref{NNN961} (here $e[\F_\varphi]=e[\F_e]$ in view of Lemma \ref{WW1b}). The last
 equality implies \eqref{NNN960}.
\end{proof}

\begin{rem}\label{KKK128}
In view of Proposition \ref{WW1},  $\{e^{-Q_\varphi/2}e_j\}$ is a canonical dual frame of  $\F_\varphi=e^{Q_\varphi/2}\F_{e}$. For this reason,  \eqref{NNN960} represents alternate dual 
frames as perturbations of the canonical one  that are parameterized by $Q$. The parameter
vanishes for the canonical dual frame (it follows from  Corollary \ref{DDD2b} that $Q=0$).
\end{rem}

By virtue of Lemma \ref{NNN950} and Theorem \ref{NEW1}, each dual frame $\F_\psi$ of $\F_\varphi=e^{Q_\varphi/2}\F_{e}$
can also be presented as
    \begin{equation}\label{DDD5}
     \F_\psi=e^{-Q_\varphi/2}e^{Q/2}\F_{e'}=R\F_{e'}, \qquad R=e^{-Q_\varphi/2}e^{Q/2},
    \end{equation}
 where an operator $Q$ and a PF $\F_{e'}$  are taken from Theorem \ref{NEW1}.

 The operator $R$ in \eqref{DDD5} is not necessarily self-adjoint in $\K$. For this reason the formula  ${\F}_\psi=R{{\F}}_{e'}$ differs from the standard presentation of frames ${\F}_\psi=e^{Q/2}{{\F}}_{e}$  (see Proposition \ref{WW1}), that is used consistently in the paper.
 Using the polar decomposition of $R=|R^*|U$ \cite[Chapter VI, (2.26)]{Kato}, where 
 $$
 |R^*|=\sqrt{RR^*}=\sqrt{e^{-Q_\varphi/2}e^{Q}e^{-Q_\varphi/2}}
 $$
 and $U$ is a unitary 
operator in $\K$, we obtain
\begin{equation}\label{KK23}
{\F}_\psi=R{{\F}}_{e'}=|R^*|U{{\F}}_{e'}=\sqrt{e^{-Q_\varphi/2}e^{Q}e^{-Q_\varphi/2}}\F_{Ue'}=e^{Q_\psi/2}\F_{Ue'},
\end{equation}
where $\F_{Ue'}=U\F_{e'}$ is a PF and $e^{Q_\psi/2}=\sqrt{e^{-Q_\varphi/2}e^{Q}e^{-Q_\varphi/2}}$ is a  self-adjoint operator in $\K$.

The formula \eqref{KK23} is simplified if $Q$ commutes with $Q_\varphi$. Then 
$R=e^{-Q_\varphi/2}e^{Q/2}=e^{(Q-Q_\varphi)/2}$ and it is positive. This yields
$U=I$ and the formula \eqref{KK23} is rewritten as
\begin{equation}\label{KKK1}
{\F}_\psi=e^{(Q-Q_\varphi)/2}{\F}_{e'}.
\end{equation}

\begin{cor}\label{cor1}
The following are equivalent:
\begin{enumerate}
\item[(i)]  a frame  $\F_\varphi=e^{Q_\varphi/2}\F_{e}$ has an $A$-tight dual frame;
\item[(ii)] the operator $Q_\varphi+(\ln{A})I$ is nonnegative in $\K$ and 
$\dim\mathcal{R}(A-e^{-Q_\varphi})\leq{e[\F_\varphi]}.$
\end{enumerate}
\end{cor}
\begin{proof}
$(i)\to(ii)$.
The frame operator coincides with multiplication by $A$ for $A$-tight frames.
Hence, each $A$-tight frame has the form 
$\F_\psi=\sqrt{A}\F_{e^o}$, where $\F_{e^o}$ is a PF in $\K$.
On the other hand, dual frames of $\F_\varphi$ are determined  by \eqref{KK23}.
Comparing these formulas and taking into account that  $e^{Q_\psi/2}$ and ${{\F}}_{Ue'}$ are determined uniquely by  $\F_\psi$ we arrive at the conclusion that
$$
\sqrt{A}=e^{Q_\psi/2}=\sqrt{e^{-Q_\varphi/2}e^Qe^{-Q_\varphi/2}}, \qquad \F_{e^o}={{\F}}_{Ue'}.
$$ 
The first relation means that $e^Q=Ae^{Q_\varphi}$. Therefore, $Q=Q_\varphi+(\ln{A})I$.
Due to Theorem \ref{thm5}, the operator $Q$ is nonnegative and
$$
\dim\mathcal{R}(I-e^{-Q})=\dim\mathcal{R}(I-\frac{1}{A}e^{-Q_\varphi})=\dim\mathcal{R}(A-e^{-Q_\varphi})\leq{e[\F_\varphi]}.  
$$

$(ii)\to(i)$. By the assumption, $Q=Q_\varphi+(\ln{A})I$ is nonnegative and 
\eqref{NNN961} holds. Moreover $Q$ commutes with $Q_\varphi$. Using 
\eqref{KKK1} we obtain a dual frame ${\F}_\psi=\sqrt{A}{\F}_{e'}$ of $\F_\varphi$ that is $A$-tight.
\end{proof}

\begin{rem}\label{NEWNEW}
Frames hawing Parseval duals were investigated in detail in \cite{BB, DH} by other methods. 
Corollary \ref{cor1} is equivalent to \cite[Proposition 2.4]{DH} for $A=1$.   
\end{rem}
 
\section{Dual frames for near-Riesz bases}\label{Sec2.4} 
 A frame with finite excess is called a near-Riesz basis. A near-Riesz basis 
 contains a Riesz basis as a subset and it behaves in many respects like Riesz basis
 \cite{Holub}. 
 
Let $\F_\varphi$ be a near-Riesz basis. Its
duals are described by the general formula \eqref{NNN960} in Theorem \ref{thm5}. 
This formula can be improved  by specifying the complementary PF
$\F_m$ in the Naimark dilation theorem (see Theorem \ref{Naimark})
for the case of near-Riesz bases. The corresponding version of
of Theorem \ref{Naimark} is proved below.

\subsection{Naimark dilation theorem for near-Riesz bases}\label{sec3.1}
Let a PF $\F_e=\{e_j,  j\in\mathbb{J}\}$ be a near-Riesz basis. 
Then  the index set can be decomposed $\mathbb{J}=\mathbb{J}_0\cup\mathbb{J}_1$ in such a way
 that $\F_e^0=\{e_j, j\in\mathbb{J}_0\}$ is a Riesz basis in $\K$ and $e[\F_e]=|\mathbb{J}_1|$. 

By virtue of Proposition \ref{WW1}, there exist a self-adjoint operator $Q_0$ and 
an orthonormal basis $\F_b=\{b_j, j\in\mathbb{J}_0\}$ in $\K$ such that $\F_e^0=e^{{Q_0}/2}\F_b$.

Denote 
\begin{equation}\label{AGHWMS44}
\M_1=\mbox{span}\{e_j,   j\in\mathbb{J}_1\},  \qquad  \F_e^1=\{e_j, j\in\mathbb{J}_1\}.
\end{equation}
By the construction, $\dim\M_1\leq|\mathbb{J}_1|={e[\F_e]}<\infty$ and $\F_e^1$ is a frame in $\M_1$.
\begin{lemma}\label{AGHWMS55}
The subspace $\M_1$ coincides with $\mathcal{R}(I-e^{{Q_0}})$ and the frame operator $S$ of $\F_e^1$ coincides with the restriction of $I-e^{{Q_0}}$ onto $\M_1$, i.e. $S=(I-e^{{Q_0}})|_{\M_{1}}$. 
\end{lemma}
 \begin{proof} 
 The frame operator of $\F_e^1$ is $S=I-e^{{Q_0}}$ because, for all  $g\in\M_1$,
 $$
 {S}g=\sum_{j\in\mathbb{J}_1}(g, e_j)e_j=g-\sum_{j\in\mathbb{J}_0}(g, e_j)e_j=g-\sum_{j\in\mathbb{J}_0}(e^{{Q_0}/2}g, b_j)e^{{Q_0}/2}b_j=(I-e^{{Q_0}})g.
 $$
 This means that $\M_1=\mathcal{R}((I-e^{{Q_0}})|_{\M_{1}})$. Assume that
 $f\in\K\ominus\M_1$. Then  
 $$
 f=\sum_{j\in\mathbb{J}}(f, e_j)e_j=\sum_{j\in\mathbb{J}_0}(f, e_j)e_j=\sum_{j\in\mathbb{J}_0}(f, e^{{Q_0}/2}b_j)e^{{Q_0}/2}b_j=e^{Q_0}f
 $$
 and, hence, $f\in\ker(I-e^{{Q_0}})$.  Thus $\K\ominus\M_1\subset\ker(I-e^{{Q_0}})$ and
 $\M_1=\mathcal{R}((I-e^{{Q_0}})|_{\M_{1}})=\mathcal{R}(I-e^{{Q_0}})$.
 \end{proof}

Since 
$$
\|f\|^2=\sum_{j\in\mathbb{J}_0}|(f, e^{{Q_0}/2}b_j)|^2+\sum_{j\in\mathbb{J}_1}|(f, e_j)|^2=\|e^{{Q_0}/2}f\|^2+\sum_{j\in\mathbb{J}_1}|(f, e_j)|^2, \quad f\in\K,
$$
the operator $e^{{Q_0}/2}$ is a contraction  in $\K$. 
Hence, $e^{{Q_0}/2}\leq{I}$ and $e^{-{Q_0}/2}\geq{I}$. This means that 
the operator $(e^{{-Q_0}}-I)^{1/2}$ is well-defined in $\K$. Moreover, 
in view of Lemma \ref{AGHWMS55}, its restriction onto $\M_1$ is invertible.
Therefore, the vectors 
 \begin{equation}\label{KKK114}
 \{k_j^0=e_j\oplus(e^{{-Q_0}}-I)^{1/2}e_j \}_{j\in\mathbb{J}_0}, \qquad \{k_j^1=e_j\oplus-(e^{{-Q_0}}-I)^{-1/2}e_j\}_{j\in\mathbb{J}_1}
 \end{equation}
 are well-defined and they belong to the Hilbert space $\K_{ext}=\K\oplus\M_1$. 
 
 Denote by $\mathcal{L}_0$ and $\mathcal{L}_1$ the subspace of  $\K_{ext}$ 
 generated by $\{k_j^0\}_{j\in\mathbb{J}_0}$ and $\{k_j^1\}_{j\in\mathbb{J}_1}$,
 respectively.
\begin{lemma}\label{GRR1}
The set $\{k_j^0\}_{j\in\mathbb{J}_0}$ is an orthonormal basis of $\mathcal{L}_0$, while
$\{k_j^1\}_{j\in\mathbb{J}_1}$ is a PF in $\mathcal{L}_1$ with the excess $e[\F_e]-\dim\M_1$.
The decomposition 
$\K_{ext}=\mathcal{L}_0\oplus\mathcal{L}_1$ holds.
\end{lemma}
\begin{proof}
The set $\{k_j^0\}_{j\in\mathbb{J}_0}$ is an orthonormal system in $\K_{ext}$ since
 $$
 (k_j^0, k_i^0)=(e_j, e_i)+((e^{{-Q_0}}-I)e_j, e_i)=(e^{{-Q_0}}e_j, e_i)=(b_j, b_i)=\sigma_{ji}.
 $$
  By the definition
 of $\mathcal{L}_0$, the set $\{k_j^0\}_{j\in\mathbb{J}_0}$ is an orthonormal basis of $\mathcal{L}_0$.

The subspaces $\mathcal{L}_0$ and $\mathcal{L}_1$ are orthogonal in $\K_{ext}$ since
$$
(k_j^0, k_i^1)=(e_j, e_i)+((e^{{-Q_0}}-I)^{1/2}e_j, -(e^{{-Q_0}}-I)^{-1/2}e_i)=(e_j, e_i)-(e_j, e_i)=0.
$$

Let a vector $f\oplus{g}\in\K_{ext}$ be orthogonal to $\mathcal{L}_0$. Then, for every $k_j^0$,
$$
0=(k_j^0, f\oplus{g})=(e_j, f)+((e^{{-Q_0}}-I)^{1/2}e_j, g)=(e_j, f+(e^{{-Q_0}}-I)^{1/2}g).
$$
Therefore, $f=-(e^{{-Q_0}}-I)^{1/2}g$ (since the set $\{e_j=e^{Q_0/2}b_j\}_{j\in\mathbb{J}_0}$
is complete in $\K$) and, as a result, 
$$
\K_{ext}\ominus\mathcal{L}_0=\{r=-(e^{{-Q_0}}-I)^{1/2}g\oplus{g}, \ g\in\M_1\}\supset{\mathcal{L}_1}.
$$ 
This means that $\dim\K_{ext}\ominus\mathcal{L}_0=\dim\M_1<\infty$.
On the other hand, by the construction, $\dim\mathcal{L}_1=\dim\M_1$. Therefore, 
 $\K_{ext}=\mathcal{L}_0\oplus\mathcal{L}_1$ and ${\mathcal{L}_1}=\{r=-(e^{{-Q_0}}-I)^{1/2}g\oplus{g}, \ g\in\M_1\}$.

To complete the proof one should verify that $\{k_j^1\}_{j\in\mathbb{J}_1}$ is a PF in $\mathcal{L}_1$.
For each vector $r=-(e^{{-Q_0}}-I)^{1/2}g\oplus{g}$ from $\mathcal{L}_1$, we get 
$$
|(r, k_j^1)|=|([(e^{{-Q_0}}-I)^{1/2}+(e^{{-Q_0}}-I)^{-1/2}]g, e_j)|=|(e^{-Q_0/2}g, (I-e^{Q_0})^{-1/2}e_j)|.
$$
In view of  Lemma \ref{AGHWMS55}, $S^{-1/2}\F_e^1=\{(I-e^{Q_0})^{-1/2}e_j, j\in{\mathbb{J}_1}\}$ is a PF in $\M_1$.
 Therefore, for all $r=-(e^{{-Q_0}}-I)^{1/2}g\oplus{g}\in\mathcal{L}_1$,
 $$
 \sum_{j\in\mathbb{J}_1}|(r, k_j^1)|^2=\sum_{j\in\mathbb{J}_1}|(e^{-Q_0/2}g, S^{-1/2}e_j)|^2=\|e^{-Q_0/2}g\|^2=\|r\|^2.
 $$
 The obtained relation means that $\{k_j^1\}_{j\in\mathbb{J}_1}$ is a PF in $\mathcal{L}_1$. Its excess is
 $|\mathbb{J}_1|-\dim\M_1=e[\F_e]-\dim\M_1$.
\end{proof} 
 
 Now we are ready to prove the Naimark dilation theorem for near-Riesz bases.
\begin{thm}\label{Naimark2}
If  a PF $\F_e=\{e_j,  j\in\mathbb{J}\}$ is  a near-Riesz basis, then there exists a Hilbert space $\M_2$ and a complementary PF
$\F_{m}=\{m_j, \ j\in \mathbb{J}\}$ in $\M_2$ such that $\F_{h}=\{{h}_j, \ j\in\mathbb{J}\}$, where
$$
h_j=\left\{\begin{array}{l}
e_j\oplus(e^{-{Q_0}}-I)^{1/2}e_j\oplus{0}, \quad  j\in \mathbb{J}_0 \vspace{2mm}\\
e_j\oplus-(e^{-{Q_0}}-I)^{-1/2}e_j\oplus{m_j}, \quad  j\in\mathbb{J}_1
\end{array}\right.
$$
is an orthonormal basis of $\Hil=\K\oplus\M_1\oplus\M_2$. 
The dimension of $\M_2$ coincides with $e[\F_e]-\dim\M_1$, where the space $\M_1$ is defined in \eqref{AGHWMS44}
\end{thm}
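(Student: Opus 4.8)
The plan is to verify directly that the proposed family $\{\mathbf{e}_j\}$ is an orthonormal basis of $\Hil = \K\oplus\M_1\oplus\M_2$ by splitting the verification into three parts: normalization/orthogonality within $\mathbb{J}_0$, within $\mathbb{J}_1$, between the two blocks, and then completeness. First I would recall that since $\F_e^0 = e^{Q_0/2}\F_b$ with $\F_b$ an orthonormal basis and $e^{Q_0/2}$ a positive contraction (as established in the discussion preceding the statement, using that $Q_0$ is non-positive), the operator $e^{-Q_0} - I$ is a nonnegative bounded operator on $\K$, so $(e^{-Q_0}-I)^{1/2}$ makes sense on the first block; I must be slightly careful that $(e^{-Q_0}-I)^{-1/2}$ appearing in the $\mathbb{J}_1$ row is applied to the vectors $e_j$, $j\in\mathbb{J}_1$, which lie in $\M_1$, and here I would read the formula as involving the action of $e^{-Q_0}-I$ restricted to the relevant subspace — this is exactly the kind of domain issue that is the main obstacle (see below). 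The key algebraic identity driving orthonormality is the PF reproducing relation: for the original near-Riesz PF $\F_e$ we have $\sum_{j\in\mathbb{J}}(f,e_j)e_j = f$, i.e. $\sum_{j\in\mathbb{J}_0}(f,e_j)e_j + \sum_{j\in\mathbb{J}_1}(f,e_j)e_j = f$, while the Riesz-basis part gives $\sum_{j\in\mathbb{J}_0}(f,e_j)e_j = \sum_{j\in\mathbb{J}_0}(f,e^{Q_0/2}b_j)e^{Q_0/2}b_j = e^{Q_0}f$. Subtracting yields $\sum_{j\in\mathbb{J}_1}(f,e_j)e_j = (I-e^{Q_0})f$, which is the single relation that will make all the cross terms collapse.

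The main steps, in order: (1) Orthonormality for $j,k\in\mathbb{J}_0$: compute $(\mathbf{e}_j,\mathbf{e}_k) = (e_j,e_k) + ((e^{-Q_0}-I)^{1/2}e_j,(e^{-Q_0}-I)^{1/2}e_k) = (e_j,e_k) + ((e^{-Q_0}-I)e_j,e_k)$. Writing $e_j = e^{Q_0/2}b_j$ this becomes $(e^{Q_0}b_j,b_k) + ((I - e^{Q_0})b_j,b_k) = (b_j,b_k) = \delta_{jk}$, as desired. (2) Orthogonality between blocks $j\in\mathbb{J}_0$ and $k\in\mathbb{J}_1$: the $\M_2$-components vanish on the $\mathbb{J}_0$ side, so $(\mathbf{e}_j,\mathbf{e}_k) = (e_j,e_k) - ((e^{-Q_0}-I)^{1/2}e_j, (e^{-Q_0}-I)^{-1/2}e_k)$; formally pairing the middle terms gives $(e_j,e_k) - (e_j,e_k) = 0$, and I must justify this cancellation rigorously by interpreting the second term via the spectral calculus on the range where $e^{-Q_0}-I$ is boundedly invertible — precisely the obstacle flagged above. (3) Orthonormality for $j,k\in\mathbb{J}_1$: $(\mathbf{e}_j,\mathbf{e}_k) = (e_j,e_k) + ((e^{-Q_0}-I)^{-1}e_j,e_k) + (\tilde{e}_j^2,\tilde{e}_k^2)$, and using the relation $\sum_{j\in\mathbb{J}_1}(f,e_j)e_j = (I-e^{Q_0})f$ on $\M_1$ one sees $(e^{-Q_0}-I)^{-1}$ acts on $\M_1$ as the inverse of $e^{-Q_0}-I$ restricted there; the condition that $\F_{\tilde e}^2$ be the \emph{complementary} PF in $\M_2$ is then exactly the Naimark requirement that the defect $\delta_{jk} - (e_j,e_k) - ((e^{-Q_0}-I)^{-1}e_j,e_k)$ equals $(\tilde e_j^2,\tilde e_k^2)$ — i.e. $\M_2$ and $\F_{\tilde e}^2$ are \emph{defined} so as to make this hold, which is legitimate provided the defect form is a nonnegative (Gram) form of the correct rank. (4) Completeness: show $\overline{\mathrm{span}}\{\mathbf{e}_j\} = \Hil$, which follows because the projection onto $\K$ recovers the PF $\F_e$ (complete in $\K$) and the $\M_1\oplus\M_2$-components were constructed to exhaust the orthogonal complement; alternatively, count dimensions via Remark \ref{rem2} — $\dim\M_2 = e[\F_e^1]$ by \eqref{AGHWMS33} — and invoke uniqueness of the Naimark dilation in Theorem \ref{Naimark}.

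The hard part will be making the appearances of $(e^{-Q_0}-I)^{-1/2}$ and $(e^{-Q_0}-I)^{-1}$ rigorous: $e^{-Q_0}-I$ may well have $0$ in its spectrum (indeed it is $0$ precisely where $Q_0$ vanishes, i.e. where $\F_e^0$ already behaves like an orthonormal system), so the inverse only exists on a subspace. I expect the right bookkeeping is to observe that the vectors $e_j$ with $j\in\mathbb{J}_1$ all lie in $\M_1 = \overline{\mathrm{span}}\{e_j : j\in\mathbb{J}_1\}$, and that the relation $(I-e^{Q_0})f = \sum_{j\in\mathbb{J}_1}(f,e_j)e_j$ shows the range of $I-e^{Q_0}$ — equivalently the range of $e^{-Q_0}-I$ — is contained in $\overline{\M_1}$, while on $\M_1$ the operator $e^{-Q_0}-I$ is strictly positive, hence boundedly invertible there; thus $(e^{-Q_0}-I)^{-1/2}e_j$ and $(e^{-Q_0}-I)^{-1}e_j$ are well-defined vectors in $\M_1$. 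Once this is pinned down, steps (1)–(4) are the routine computations sketched above, and the identification $\dim\M_2 = e[\F_e^1]$ follows by comparing the rank of the residual Gram form with \eqref{AGHWMS33}, or by the uniqueness clause of Theorem \ref{Naimark} applied to the complementary PF. I would present the argument by first stating and proving the auxiliary lemma that $e^{-Q_0}-I$ is boundedly invertible on $\M_1$ (with range of $I - e^{Q_0}$ dense in $\M_1$), and only then carrying out the four orthonormality/completeness checks.
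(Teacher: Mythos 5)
Your proposal is correct and follows essentially the same route as the paper: the invertibility of $e^{-Q_0}-I$ on $\M_1$ that you isolate as the key auxiliary lemma is exactly the paper's Lemma \ref{AGHWMS55}, your $\mathbb{J}_0$-block and cross-block orthogonality computations are the ones the paper performs, and your step (3) --- choosing $\M_2$ and $\F_{\tilde{e}}^2$ to match the residual Gram form --- is precisely the paper's Lemma \ref{GRR1} (the vectors $p_j=e_j\oplus-(e^{-Q_0}-I)^{-1/2}e_j$ form a Parseval frame of $(\K\oplus\M_1)\ominus\mathcal{L}_0$) followed by one application of Theorem \ref{Naimark}. The one item you leave as a proviso --- that the defect form is nonnegative of rank $e[\F_e^1]$ --- is settled in the paper by the computation $\sum_{j\in\mathbb{J}_1}|(g,p_j)|^2=\|g\|^2$, which uses that $(I-e^{Q_0})^{-1/2}\F_e^1$ is a Parseval frame in $\M_1$.
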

\begin{proof}
 By Lemma \ref{GRR1},  the set $\{k_j^1, j\in\mathbb{J}_1\}$ 
 is a PF in $\mathcal{L}_1$ with the excess $e[\F_e]-\dim\M_1$. For this PF, in view of  Theorem \ref{Naimark}, there exists a complementary PF
$\F_{m}=\{m_j, \ j\in \mathbb{J}_1\}$ in a Hilbert space  $\M_2$ with $\dim\M_2=e[\F_e]-\dim\M_1$  and  such that $\{k_j^1\oplus{m_j, j\in\mathbb{J}_1}\}$ is an orthonormal basis in $\mathcal{L}_1\oplus\M_2$.

Since $\{k_j^0, j\in\mathbb{J}_0\}$ is an orthonormal basis in $\mathcal{L}_0$
we arrive at the conclusion that the vectors
$$
h_j=\left\{\begin{array}{l}
{k_j^0}\ {\oplus} \ {0} \ {\oplus} \ {0}, \quad  j\in \mathbb{J}_0 \vspace{2mm}\\
0\ {\oplus}\ k_j^1\ {\oplus}{m_j}, \quad  j\in\mathbb{J}_1
\end{array}\right.
$$
form an orthonormal basis in the Hilbert space $\mathcal{L}_0{\oplus}\mathcal{L}_1{\oplus}{\M_2}$.  
To complete the proof it suffices: (a) to note that 
$\mathcal{L}_0{\oplus}\mathcal{L}_1{\oplus{\M_2}}=\K{\oplus}\M_1{\oplus}{\M_2}$; 
(b) to consider the extended PF $\F_{m}=\{m_j, \ j\in \mathbb{J}\}$ in $\M_2$ assuming $m_j=0$ when $j\in\mathbb{J}_0$;
(c) to use \eqref{KKK114}.
\end{proof}
 
\subsection{Dual frames for near-Riesz bases}\label{sec3.2}
Let a frame $\F_\varphi$ be a near-Riesz basis in $\K$.
Dual frames for $\F_\varphi$ are described in Theorem \ref{thm5}.
In the present section we make these results more precise
taking into account the specific of near-Riesz bases. 
Namely, if $\F_\varphi=e^{Q_\varphi/2}\F_e$ is a
near-Riesz basis, then the PF $\F_e$ is a near-Riesz basis too 
and $e[\F_\varphi]=e[\F_e]<\infty$ (Lemma \ref{WW1b}). Combining
the Naimark dilation Theorem \ref{Naimark2} for the near-Riesz basis $\F_e$ with 
general results of Theorem \ref{thm5} we obtain
\begin{cor}
Given a near-Riesz basis $\F_\varphi=e^{Q_\varphi/2}\F_{e}$ in $\K$.  Each dual frame $\F_\psi=\{\psi_j, j\in\mathbb{J}\}$ of $\F_\varphi$ consists of the vectors
\begin{equation}\label{AGHWMS101b}
\psi_j=\left\{\begin{array}{l}
e^{-Q_\varphi/2}[I+e^{Q/2}W_{12}(e^{-{Q_0}}-I)^{1/2}]e_j, \quad  j\in\mathbb{J}_0  \vspace{2mm} \\
e^{-Q_\varphi/2}[I-e^{Q/2}W_{12}(e^{-{Q_0}}-I)^{-1/2}]e_j+e^{Q/2}W_{12}m_j, \quad  j\in\mathbb{J}_1
\end{array}\right.,
\end{equation}
where $Q$ is a bounded nonnegative operator in 
$\K$  such that $\dim\mathcal{R}(I-e^{-Q})\leq{e[\F_\varphi]}$.
Furthermore, $\{m_j\}$ 
is the complementary PF of $\F_e$ in $\M_2$ (see Theorem \ref{Naimark2})
and  $W_{12}:\M\to\K$ is a part of an unitary operator $W$ defined by \eqref{DDD1}
and acting in $\K\oplus\M$, where $\M=\M_1\oplus\M_2$.
\end{cor}

The canonical dual frame $\F_\psi=e^{-Q_\varphi/2}\F_e$ is distinguished by $Q=0$ in \eqref{AGHWMS101b}   (in this case $W_{12}=0$, see  Corollary \ref{DDD2b}).

Another interesting alternative dual frame can be obtained from  \eqref{AGHWMS101b} if we set
$Q=-Q_0$. Such a choice is possible because $-Q_0$ is nonnegative (since $e^{Q_0/2}$ is a contraction in $\K$, see the proof of Lemma \ref{AGHWMS55}) and $\dim\mathcal{R}(I-e^{-Q})=\dim\mathcal{R}(I-e^{Q_0})=\dim\M_1\leq{e[\F_e]}={e[\F_\varphi]}$.
Choosing $Q=-Q_0$ and assuming that $W$ is defined by \eqref{eq:ww} (in this case $W_{12}|_{\M_1}= (I-e^{-Q})^{1/2}$ and $W_{12}|_{\M_2}=0$)
we obtain possibly simplest alternate dual frame of $\F_\varphi$
$$
\F_\psi=\{\psi_j=e^{-Q_\varphi/2}e^{-Q_0}e_j=e^{-Q_\varphi/2}e^{-Q_0/2}b_j, \ j\in\mathbb{J}_0\}\cup\{\psi_j=0, \  j\in\mathbb{J}_1\},
$$
where $\{e^{-Q_0/2}b_j, \ j\in\mathbb{J}_0\}$ is the bi-orthogonal Riesz basis for $\F_e^0=e^{{Q_0}/2}\F_b$.

\section*{Acknowledgements}
This work was partially supported by the Faculty of Applied Mathematics AGH UST statutory tasks within subsidy of 
Ministry of Science and Higher Education.
\section*{Bibliography}

\end{document}